\providecommand{\U}[1]{\protect\rule{.1in}{.1in}}
\numberwithin{equation}{section}
\providecommand{\U}[1]{\protect\rule{.1in}{.1in}}
\newtheorem{theorem} {Theorem} [section]
\newtheorem{proposition}[theorem]{Proposition}
\newtheorem{corollary}  [theorem]     {Corollary}
\begin{document}

\title{The correspondence formula of Dolbeault complex on pair deformation}

\author{Jie Tu}
\address{Corresponding author. Center of Mathematical Sciences, Zhejiang University, Hangzhou, 310027 China}
\email{tujie@zju.edu.cn, tujietujietujie@126.com}

\date{\today}

\subjclass[2010]{}
\keywords{}

\begin{abstract}
Given a holomorphic family of pairs $\{(X_t,E_t)\}$, where each $E_t$ is holomorphic vector bundle over compact complex manifold $X_t$. For small enough $t$, we get a correspondence between the Dolbeault complex of $E_t$-valued $(p,q)$-forms on $X_t$ and the one of $E_0$-valued $(p,q)$-forms on $X_0$.
\end{abstract}
\maketitle

\section{Introduction}

The Beltrami differential( or Kuranishi data) plays a central role in analytic deformation theory. Given a holomorphic family of compact complex manifolds, i.e., a holomorphic proper submersion
\begin{align*}
\pi:\mathcal{X}\rightarrow\Delta
\end{align*}
from a total complex manifold $\mathcal{X}$ to a small neighbourhood $\Delta\subset\mathbb{C}^m$ of $0$. There exists a transversely holomorphic trivialization (see \cite[Appendix A]{Clemens2005}, \cite[Proposition 9.5]{Voisin2002} and \cite[Theorem IV.31]{Manetti2005})
\begin{align*}
F=(\sigma,\pi):\mathcal{X}\rightarrow X_0\times\Delta,\quad X_t:=\pi^{-1}(t),
\end{align*}
such that each fibre $X_t$ is considered as $X_0$ with a new complex structure $J_t$. The Beltrami differential is a holomorphic tangent bundle valued form
\begin{align*}
\varphi(t)\in A^{0,1}(X_0,T^{1,0}X_0)
\end{align*}
which describes the varying of complex structures on $X_0$.

In \cite{Liu-Rao-Yang2014}, Liu-Rao-Yang defined an extension map
\begin{align*}
e^{i_{\varphi(t)}}=\sum^{\infty}_{k=0}\frac{1}{k!}i^{k}_{\varphi(t)},\quad i^{k}_{\varphi(t)}=\underbrace{i_{\varphi(t)}\circ \cdots \circ i_{\varphi(t)}}_{k ~~~\text{times}}.
\end{align*}
which is a linear isomorphism
\begin{align*}
e^{i_{\varphi(t)}}:A^{n,p}(X_0)\rightarrow A^{n,p}(X_t)
\end{align*}
between $(n,q)$-forms on $X_0$ and $(n,q)$-forms on $X_t$, where $n=\dim_{\mathbb{C}}X_0$.

They also proved a formula \cite[Theorem 1.3]{Liu-Rao-Yang2014} that
\begin{align}\label{1-1}
e^{-i_{\varphi(t)}}\circ\nabla\circ e^{i_{\varphi(t)}}
=\nabla-\mathcal{L}^{1,0}_{\varphi(t)}+i_{\bar{\partial}\varphi(t)-\frac{1}{2}\left[\varphi(t),\varphi(t)\right]}
\end{align}
where $\nabla$ is a Chern connection, and the generalized Lie derivative is defined as
\begin{align*}
\mathcal{L}^{1,0}_{\varphi(t)}:=i_{\varphi(t)}\circ\nabla^{1,0}-\nabla^{1,0}\circ i_{\varphi(t)}.
\end{align*}

As a direct corollary \cite[Proposition 5.1]{Liu-Rao-Yang2014}, the Dolbeault operator $\bar{\partial}_t$ on the complex $A^{n,\cdot}(X_t)$ satisfies the following commutative diagram.
\begin{align*}
\xymatrix{
A^{n,q}(X_0) \ar[rr]^{e^{i_{\varphi(t)}}}\ar[d]_{\bar{\partial}_t}& & A^{n,q}(X_t) \ar[d]_{\bar{\partial}+\partial\circ i_{\varphi(t)}}\\
A^{n,q+1}(X_0) \ar[rr]^{e^{i_{\varphi(t)}}}& & A^{n,q+1}(X_t)
}
\end{align*}

Then, they calculated an extension formula from a holomorphic form $\sigma\in A^{n,0}(X_0)$ to a holomorphic form $\sigma(t)\in A^{n,0}(X_t)$ by solving the obstruction equation
\begin{align*}
\bar{\partial}\sigma(t)+\partial\circ i_{\varphi(t)}\sigma(t)=0
\end{align*}
iteratively.

Recently, the extension map was generalized by Rao-Zhao \cite{Rao-Zhao2017} into any $\bar{\partial}$-closed forms in $A^{p,q}(X_0)$. They defined a linear isomorphism
\begin{align*}
e^{i_\varphi|i_{\bar{\varphi}}}:A^{p,q}(X_0)\rightarrow A^{p,q}(X_t).
\end{align*}
In this situation, the $\bar{\partial}_t$-closed equation on $A^{p,q}(X_t)$ corresponds to the equation
\begin{align*}
\left(\bar{\partial}-L^{1,0}_{\varphi(t)}\right)\sigma(t)=0
\end{align*}
on $X_0$, where the holomorphic Lie derivative is defined as
\begin{align*}
L^{1,0}_{\varphi(t)}:=i_{\varphi(t)}\circ\partial-\partial\circ i_{\varphi(t)}.
\end{align*}

The main purpose of this article is to use the pair deformation $\{(X_t,E_t)\}$ which was described in \cite{Liu-Tu2018} to give a correspondence between $E_t$-valued $(p,q)$-forms on $X_t$ and $E_0$-valued $(p,q)$-forms on $X_0$.

The correspondence is defined by a commutative diagram
\begin{align*}
\xymatrix{
A^{p,q}(X_0,E_0) \ar[rr]^{I^{p,q}_0}\ar[d]_{P_t}& & A^{n,q}(X_0,\Omega_0^p\otimes K_0^{-1}\otimes E_0) \ar[d]_{e^{i_{\phi(t)}}}\\
A^{p,q}(X_t,E_t) \ar[rr]^{I^{p,q}_t}& &  A^{n,q}(X_t,\Omega_t^p\otimes K_t^{-1}\otimes E_t)
}
\end{align*}
where $I^{p,q}_t$ preserves the $\bar{\partial}_t$-complex.

The Dolbeault operator $\bar{\partial}_t$ on each fiber $\left(X_t,E_t\right)$ satisfies
\begin{theorem}(Theorem \ref{thm-2})
For any $E_0$-valued $(p,q)$-form $s\in A^0(X_0,E_0)$, we have
\begin{align*}
\bar{\partial}_t\circ P_t(s)=P_t\circ\left(\bar{\partial}-\mathcal{L}^{1,0}_{\phi(t)}+\psi_{E_t}\right)s£¬
\end{align*}
\end{theorem}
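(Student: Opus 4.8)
The plan is to read off $P_t$ from the defining square, $P_t=(I^{p,q}_t)^{-1}\circ e^{i_{\phi(t)}}\circ I^{p,q}_0$, and then to push the claimed identity through this factorization. Since $I^{p,q}_t$ is by hypothesis a chain isomorphism for the two $\bar{\partial}_t$-complexes — so that $I^{p,q}_t\circ\bar{\partial}_t=\bar{\partial}_t\circ I^{p,q}_t$, the right-hand $\bar{\partial}_t$ being the Dolbeault operator of the holomorphic bundle $\Omega^p_t\otimes K_t^{-1}\otimes E_t$ — applying $I^{p,q}_t$ on the left to both sides of the assertion reduces it, after a further composition with $(I^{p,q}_0)^{-1}$, to the identity of maps
\[
\bar{\partial}_t\circ e^{i_{\phi(t)}}=e^{i_{\phi(t)}}\circ I^{p,q}_0\circ\bigl(\bar{\partial}-\mathcal{L}^{1,0}_{\phi(t)}+\psi_{E_t}\bigr)\circ (I^{p,q}_0)^{-1}
\]
on $A^{n,q}(X_0,V_0)$, where $V_t:=\Omega^p_t\otimes K_t^{-1}\otimes E_t$. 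So everything comes down to a Liu--Rao--Yang type commutation relation for $e^{i_{\phi(t)}}$ on the $(n,q)$-forms valued in the family $\{V_t\}$, together with the transfer of the right-hand operator across the (essentially tautological) Dolbeault identification $I^{p,q}_0$.

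For the commutation relation I would invoke formula \eqref{1-1}, applied to the Chern connection of $V_t$ relative to a Hermitian metric restricted from a fixed metric on the corresponding total bundle over $\mathcal{X}$. On $A^{n,q}$ the $(1,0)$-part of any Chern connection raises the holomorphic degree beyond $n$ and hence vanishes; thus $\nabla$ collapses to $\bar{\partial}_{V_t}$ on the source, to $\bar{\partial}_{V_0}$ on the target, and $-\mathcal{L}^{1,0}_{\phi(t)}$ collapses to $\nabla^{1,0}_{V_0}\circ i_{\phi(t)}$. Integrability of the family, $\bar{\partial}\phi(t)=\tfrac12[\phi(t),\phi(t)]$, annihilates the term $i_{\bar{\partial}\phi(t)-\frac12[\phi(t),\phi(t)]}$. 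What survives is a single correction term measuring how the holomorphic structure of $V_t$ differs, under the fixed $C^\infty$ trivialization of the family, from that of $V_0$; since $V_t=\Omega^p_t\otimes K_t^{-1}\otimes E_t$ and $\Omega^n_t=K_t$, the part of this variation carried by $\Omega^p_t$ and $K_t$ is governed solely by $\phi(t)$ and is precisely the one already built into $e^{i_{\phi(t)}}$, leaving as the only residue the variation of the holomorphic structure of $E_t$ — which, by the description of the pair deformation in \cite{Liu-Tu2018}, is the operator $\psi_{E_t}$. This gives $\bar{\partial}_t\circ e^{i_{\phi(t)}}=e^{i_{\phi(t)}}\circ(\bar{\partial}_{V_0}+\nabla^{1,0}_{V_0}\circ i_{\phi(t)}+\psi_{E_t})$ on $A^{n,q}(X_0,V_0)$.

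It then remains to identify $(I^{p,q}_0)^{-1}\circ(\bar{\partial}_{V_0}+\nabla^{1,0}_{V_0}\circ i_{\phi(t)}+\psi_{E_t})\circ I^{p,q}_0$ with $\bar{\partial}-\mathcal{L}^{1,0}_{\phi(t)}+\psi_{E_t}$ on $A^{p,q}(X_0,E_0)$. The $\bar{\partial}$-part is exactly the statement that $I^{p,q}_0$ is a chain map at $t=0$; the $\psi_{E_t}$-part transfers because $\psi_{E_t}$ only touches the $E_0$-factor; and the middle term is an instance of the compatibility of the Dolbeault identification with the contraction $i_{\phi(t)}$ and with the $(1,0)$-part of the Chern connection, which I would record as a short lemma. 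Combining the three inputs,
\[
\bar{\partial}_t\circ P_t=(I^{p,q}_t)^{-1}\circ e^{i_{\phi(t)}}\circ I^{p,q}_0\circ\bigl(\bar{\partial}-\mathcal{L}^{1,0}_{\phi(t)}+\psi_{E_t}\bigr)=P_t\circ\bigl(\bar{\partial}-\mathcal{L}^{1,0}_{\phi(t)}+\psi_{E_t}\bigr),
\]
which is the assertion.

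The step I expect to be the main obstacle is the isolation of the correction term as precisely $\psi_{E_t}$: inside the single deviation term produced by \eqref{1-1} for the varying family $\{V_t\}$ one has to separate the piece driven by the complex-structure deformation (already absorbed by $e^{i_{\phi(t)}}$, and which must be shown to contribute nothing extra once the relation $\Omega^n_t=K_t$ is used) from the piece driven by the bundle deformation (which becomes $\psi_{E_t}$). A secondary technical matter is the application of \eqref{1-1}, originally stated for a fixed bundle on $X_0$, to the family $\{V_t\}$; this is legitimate because one works on the total space with a fixed metric and the operator identity on $A^{n,q}$ does not depend on that choice.
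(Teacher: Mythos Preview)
Your architecture is right and matches the paper's: factor $P_t=(I^{p,q}_t)^{-1}\circ e^{i_{\phi(t)}}\circ I^{p,q}_0$, use that $I^{p,q}_t$ is a chain map, invoke Proposition~\ref{pro-2} for $e^{i_{\phi(t)}}$ on $A^{n,q}(X_0,V_0)$ with $V_t=\Omega^p_t\otimes K^{-1}_t\otimes E_t$, then conjugate back through $I^{p,q}_0$.

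The gap is precisely in the step you flag as ``the main obstacle'', and your intuition for how it resolves is wrong. Applying Proposition~\ref{pro-2} to the family $\{V_t\}$ gives
\[
e^{-i_{\phi(t)}}\circ\bar{\partial}_t\circ e^{i_{\phi(t)}}=\bar{\partial}_{V_0}+\nabla^{1,0}_{V_0}\circ i_{\phi(t)}+\psi_{V_t},
\]
with $\psi_{V_t}=\psi_{\Omega^p_t\otimes K^{-1}_t}+\psi_{E_t}$, \emph{not} $\psi_{E_t}$ alone. The term $\psi_{\Omega^p_t\otimes K^{-1}_t}$ is nonzero --- the paper computes it explicitly (it contains $\partial\circ i_\phi$ on the $dz^I$ coefficient, a divergence $\partial_l\phi^l_m\,d\bar z^m$, and an $i_\phi\circ\nabla^{1,0}$ piece) --- and it is not ``absorbed by $e^{i_{\phi(t)}}$'': the exponential acts only on the $(n,q)$-form factor, never on the bundle coefficients in $V_0$. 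The relation $\Omega^n_t=K_t$ does not make it vanish.

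Consequently your ``short lemma'' $(I^{p,q}_0)^{-1}\circ(\nabla^{1,0}_{V_0}\circ i_{\phi(t)})\circ I^{p,q}_0=-\mathcal{L}^{1,0}_{\phi(t)}$ is false as written. What is actually true, and what the paper establishes by an explicit local computation (first in the scalar case, Theorem~\ref{thm-1}, then tensoring with $E_0$), is
\[
\bigl(\nabla^{1,0}_{V_0}\circ i_{\phi(t)}+\psi_{\Omega^p_t\otimes K^{-1}_t\otimes E_t}\bigr)\circ I^{p,q}_0
=I^{p,q+1}_0\circ\bigl(-\mathcal{L}^{1,0}_{\phi(t)}+\psi_{E_t}\bigr).
\]
In that computation the pieces of $\psi_{\Omega^p_t\otimes K^{-1}_t}$ interact nontrivially with $\nabla^{1,0}_{V_0}\circ i_{\phi(t)}$ (for instance, the connection contributions in $i_\phi\circ\nabla^{1,0}_{\Omega^p_0\otimes K^{-1}_0}$ cancel between the two terms, while the divergence $\partial_l\phi^l_m\,d\bar z^m$ cancels against a piece of $\partial\circ i_\phi\sigma_I$) to produce exactly $-L^{1,0}_{\phi(t)}$ on the scalar part; the surviving $i_\phi\nabla^{1,0}_{E_0}$ term then upgrades $L^{1,0}_{\phi(t)}$ to $\mathcal{L}^{1,0}_{\phi(t)}$. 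So the isolation of $\psi_{E_t}$ occurs \emph{after} conjugation by $I^{p,q}_0$, not before, and not via any structural cancellation principle but by an explicit coordinate identity. That computation is the real content of the theorem, and your outline skips it.
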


When each $E_t$ is the trivial line bundle, we get a correspondence between $(p,q)$-forms on $X_t$ and $(p,q)$-forms on $X_0$.
\begin{theorem}(Theorem \ref{thm-1})
For any $(p,q)$-form $s\in A^{p,q}(X_0)$, we have
\begin{align*}
\bar{\partial}_t\circ P_t (s)=P_t\circ\left(\bar{\partial}-L^{1,0}_{\phi(t)}\right)s.
\end{align*}
\end{theorem}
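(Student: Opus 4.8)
The plan is to reduce Theorem \ref{thm-1} (the trivial-line-bundle case) to the already-established formula \eqref{1-1}, or rather its $\bar\partial$-component, by interpreting $P_t$ as the composite $e^{-i_{\phi(t)}}\circ I^{p,q}_t\,^{-1}\circ e^{i_{\phi(t)}}\circ I^{p,q}_0$ dictated by the defining commutative square, and then chasing the diagram. Concretely, when $E_t$ is trivial, the isomorphism $I^{p,q}_t:A^{p,q}(X_t)\to A^{n,q}(X_t,\Omega^p_t\otimes K_t^{-1})$ is the canonical one that rewrites a $(p,q)$-form as an $(n,q)$-form valued in $\Omega^p_t\otimes K_t^{-1}$ (contracting/tensoring against a local frame of $K_t$), and it intertwines $\bar\partial_t$ on $(p,q)$-forms with the $\bar\partial_t$ induced on the bundle-valued side. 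So the theorem will follow once I know how $\bar\partial_t$ transports across $e^{i_{\phi(t)}}$ on the right-hand vertical arrow.

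First I would pin down the right vertical map $e^{i_{\phi(t)}}:A^{n,q}(X_0,\Omega^p_0\otimes K_0^{-1}\otimes E_0)\to A^{n,q}(X_t,\cdots)$. Since it acts only on the $(n,q)$-form part, leaving the bundle coefficients essentially fixed (the bundle $\Omega^p_0\otimes K_0^{-1}$ must be identified with $\Omega^p_t\otimes K_t^{-1}$ via the same transversely holomorphic trivialization, which is where the "correspondence" genuinely lives), the commutation of $\bar\partial_t$ with $e^{i_{\phi(t)}}$ is governed by exactly the Liu–Rao–Yang corollary quoted in the excerpt: $\bar\partial_t\circ e^{i_{\phi(t)}}=e^{i_{\phi(t)}}\circ(\bar\partial+\partial\circ i_{\phi(t)})$ on $(n,\cdot)$-forms, now with the Chern connection on the auxiliary bundle built in. Transporting this through $I^{p,q}_0$ (which conjugates $\bar\partial+\partial\circ i_{\phi(t)}$ on the $(n,q)$-bundle-valued side back to an operator on $A^{p,q}(X_0)$) should produce precisely $\bar\partial-L^{1,0}_{\phi(t)}$, because under $I^{p,q}_0$ the operator $\partial$ on the $(n,q)$-side becomes (up to sign and the $K_0^{-1}$ twist) the operator $-\partial$ acting in the $p$-slot, and the difference $i_\phi\circ\partial-\partial\circ i_\phi$ is exactly the definition of $L^{1,0}_{\phi(t)}$ given in the introduction. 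So the computation is: start with $\bar\partial_t P_t(s)$, use the left square to replace $\bar\partial_t$ by the conjugate of $\bar\partial+\partial i_{\phi(t)}$, and then unwind $I^{p,q}_0$.

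The main obstacle — and the step I would spend the most care on — is the bookkeeping of the isomorphism $I^{p,q}_t$ and the identification of the coefficient bundles $\Omega^p_0\otimes K_0^{-1}$ with $\Omega^p_t\otimes K_t^{-1}$: one must check that this identification is the transversely holomorphic one compatible with $e^{i_{\phi(t)}}$, that $I^{p,q}_t$ intertwines the relevant Dolbeault operators on both sides (so that "$I^{p,q}_t$ preserves the $\bar\partial_t$-complex" really holds), and crucially that conjugating $\partial\circ i_{\phi(t)}$ by $I^{p,q}_0$ turns the "$\partial$ then contract" on $(n,q)$-forms into "$-(\,$contract then $\partial\,)$" on $(p,q)$-forms, i.e. that the sign and the $K_0^{-1}$ factor conspire to give $i_{\phi(t)}\circ\partial-\partial\circ i_{\phi(t)}=L^{1,0}_{\phi(t)}$ rather than its negative or the full generalized Lie derivative $\mathcal{L}^{1,0}_{\phi(t)}$. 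Once these identifications are nailed down, the proof is a one-line diagram chase; but getting the contraction $i_{\phi(t)}$ to interact correctly with the $\Omega^p$-valued structure (as opposed to acting on a genuine $(n,q)$-form as in the original Liu–Rao–Yang setting) is the delicate point. I would organize the argument so that Theorem \ref{thm-2} is proved first in the stated generality (with the Chern connection $\nabla$ of $E_t$ and the extra term $\psi_{E_t}$ appearing from the non-flatness / the $\partial$-part of $\nabla$ acting on $E$-coefficients), and then Theorem \ref{thm-1} drops out by setting $E_t$ trivial, whereupon $\nabla^{1,0}$ acting on the trivial factor kills the $\psi_{E_t}$ term and $\mathcal L^{1,0}_{\phi(t)}$ collapses to $L^{1,0}_{\phi(t)}$.
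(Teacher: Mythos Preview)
Your overall strategy---pass to $A^{n,q}$ with coefficients in $\Omega^p_0\otimes K_0^{-1}$ via $I^{p,q}_0$, apply the conjugation formula for $e^{i_{\phi(t)}}$ with $\bar\partial_t$, and pull back through $I^{p,q}_0$---is exactly the architecture the paper uses. But your description of the conjugation formula has a real gap. You write that ``the commutation of $\bar\partial_t$ with $e^{i_{\phi(t)}}$ is governed by exactly the Liu--Rao--Yang corollary \ldots\ $\bar\partial_t\circ e^{i_{\phi(t)}}=e^{i_{\phi(t)}}\circ(\bar\partial+\partial\circ i_{\phi(t)})$ \ldots\ now with the Chern connection on the auxiliary bundle built in.'' That is not the correct bundle-valued formula. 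Proposition~\ref{pro-2} gives
\[
e^{-i_{\phi(t)}}\circ\bar\partial_t\circ e^{i_{\phi(t)}}
=\bar\partial+\nabla^{1,0}_{\Omega^p_0\otimes K_0^{-1}}\circ i_{\phi(t)}+\psi_{\Omega^p_t\otimes K^{-1}_t},
\]
and the extra term $\psi_{\Omega^p_t\otimes K^{-1}_t}$ is \emph{not} absorbed by replacing $\partial$ with a Chern connection. It is nonzero even when $E_t$ is trivial, because the smooth identification of $\Omega^p_0\otimes K_0^{-1}$ with $\Omega^p_t\otimes K_t^{-1}$ is only transversely holomorphic. The paper computes this term explicitly (it contains pieces like $\partial\circ i_\phi\,dz^I$ and a divergence $\partial_l\phi^l_m\,d\bar z^m$) and shows that it combines with $\nabla^{1,0}_{\Omega^p_0\otimes K_0^{-1}}\circ i_{\phi(t)}$ under $I^{p,q}_0$ to produce exactly $-L^{1,0}_{\phi(t)}$; several cancellations (e.g.\ between $(i_\phi\sigma_I)\wedge\nabla^{1,0}(\cdots)$ and $\sigma_I\wedge i_\phi\nabla^{1,0}(\cdots)$, and between two divergence terms) only occur once both pieces are present. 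Your proposed one-step conjugation of $\partial\circ i_{\phi(t)}$ by $I^{p,q}_0$ would not yield $-L^{1,0}_{\phi(t)}$ on its own.

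A second point: you propose to prove Theorem~\ref{thm-2} first and obtain Theorem~\ref{thm-1} by specializing to trivial $E_t$. The paper does the opposite: it carries out the explicit local computation with $\psi_{\Omega^p_t\otimes K_t^{-1}}$ to establish Theorem~\ref{thm-1}, and then in the proof of Theorem~\ref{thm-2} splits $\psi_{\Omega^p_t\otimes K_t^{-1}\otimes E_t}$ as $\psi_{\Omega^p_t\otimes K_t^{-1}}\otimes\mathrm{id}+\mathrm{id}\otimes\psi_{E_t}$ and invokes Theorem~\ref{thm-1} for the first summand. Your order is logically possible, but only if you carry the full $\psi$-computation (for the bundle $\Omega^p_0\otimes K_0^{-1}\otimes E_0$) in the proof of Theorem~\ref{thm-2}; as written, your sketch suppresses precisely that computation, which is where all the content lies.
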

In this situation, the isomorphism map $P_t$ is different from that in \cite{Rao-Zhao2017}, but the correspondence of Dolbeault operator is the same.

The correspondence supplies a way to transform the Dolbeault complex on each fiber $X_t$ into a family of elliptic complexes on $X_0$. Then we can use standard techniques from the theory of elliptic operators to solve the varying cohomology problems. A direct application is to use iteration method to extend an element in cohomology class such as \cite[Theorem 5.5]{Liu-Rao-Yang2014}, \cite[Theorem 1.3, 1.4]{Rao-Zhao2017}, \cite[Theorem 1.5, 1.7]{Rao-Wan-Zhao2016} and \cite[Theorem 1.5]{Liu-Tu2018}. On the other aspect, the correspondence of Dolbeault operator can be applied to study the obstruction to the cohomology extension such as in \cite{Green-Lazarsfeld1987}, \cite{Green-Lazarsfeld1991} and \cite{Chan-Suen2018}.

\section{preliminary}

In this section, we will review some basic results of pair deformation in \cite{Liu-Tu2018}.

Let $X$ be a compact complex manifold and $p:E\rightarrow X$ be a holomorphic vector bundle over $X$, then the short exact sequence of tangent bundle
\begin{align*}
0\rightarrow\ker p_\ast\rightarrow TE\rightarrow p^\ast(TX)\rightarrow0
\end{align*}
has a smooth splitting via any linear connection $\nabla$ on $E$(see p.335 in \cite{Greub-Halperin-Vanstone1973}).

If the linear connection is integrable(i.e. $\nabla=\bar{\partial}+\nabla^{1,0}$), then it defines a smooth splitting on holomorphic tangent bundle
\begin{align*}
0\rightarrow\ker p_\ast|_{T^{1,0}E}\rightarrow T^{1,0}E\rightarrow p^\ast(T^{1,0}X)\rightarrow0.
\end{align*}

In \cite[Proposition 3.2]{Liu-Tu2018}, we use its dual case to get a smooth projection \begin{align*}
q_{\nabla}:T^\ast_{1,0}(E)\rightarrow p^\ast\left(T^\ast_{1,0}(X)\right)\quad \left.q_\nabla\right|_{p^\ast\left(T^\ast_{1,0}(X)\right)}=Id
\end{align*}
on holomorphic cotangent bundle of $E$ via an integrable connection $\nabla$ .

Precisely, under the holomorphic frame coordinate $(U_\alpha,e_\alpha,z_\alpha,v_\alpha)$ of $E$ (i.e., $e_\alpha$ is local frame of $E$ on an open set $U_\alpha\subset X$, $z_\alpha$ are holomorphic coordinate functions of $X$, and $v_\alpha$ are coordinate functions of vertical linear space), the projection $q_{\nabla}$ can be represented locally as
\begin{align*}
dz^i_\alpha\mapsto dz^i_\alpha,\quad dv^i_\alpha\mapsto-\theta_\alpha^{ki}v^k_\alpha,
\end{align*}
where $\theta_\alpha$ is the connection 1-form of $\nabla$ on $E$.

Recall that a holomorphic family of pairs is a holomorphic vector bundle
\begin{align*}
\mathcal{E}\overset{p}{\rightarrow}\mathcal{X}\overset{\pi}{\rightarrow}\Delta
\end{align*}
over total space $\mathcal{X}$.

As \cite[Proposition 4.1]{Liu-Tu2018}, when $\Delta$ is small enough, there is a compatible trivialization
\begin{align*}
\xymatrix{
  \mathcal{E} \ar[d] \ar[r]^{(\lambda,\pi\circ p)}
                & E_0\times\Delta \ar[d] \ar[r]^{pr2} & \Delta\ar[d]^{Id} \\
  \mathcal{X}\ar[r]^{(\sigma,\pi)}
                & X_0\times\Delta \ar[r]^{pr2} & \Delta            }
\end{align*}
such that on each fibre $\left(X_t,E_t\right)$ we have the following commutative diagram
\begin{align*}
\xymatrix{
\lambda_t:&E_t \ar[d] \ar[r] &\left(\sigma_t^{-1}\right)^\ast E_t \ar[d] \ar[r] & E_0 \ar[d] \\
 &X_t \ar[r]^{\sigma_t} & X_0 \ar[r]^{Id} & X_0   }
\end{align*}
where
\begin{align*}
\sigma_t:=\left.\sigma\right|_{X_t}\quad\text{and}\quad\lambda_t:=\left.\lambda\right|_{E_t}.
\end{align*}

Given a holomorphic family of pairs $\{(X_t,E_t)\}$, the Beltrami differential
\begin{align*}
\xi(t):T^\ast_{1,0}(E_0)\rightarrow T^\ast_{0,1}(E_0)
\end{align*}
can be represented as two parts
\begin{align*}
\phi(t)\in A^{0,1}(X_0,T^{1,0}X_0)
\end{align*}
and
\begin{align*}
\psi(t)\in A^{0,1}\left(X_0,End(E_0)\right)
\end{align*}
by \cite[Theorem 4.3]{Liu-Tu2018}. The projection $q_\nabla$ via an integrable connection $\nabla$ decides the way to decompose $\xi(t)$ into $\phi(t)$ and $\psi(t)$.

To be specific, if we have a transversely holomorphic trivialization $\sigma_t:X_t\rightarrow X_0$ and its compatible trivialization $\lambda_t:E_t\rightarrow E_0$, then we get the local representations
\begin{align*}
\phi(t)=\left(\frac{\partial(z_\alpha)_t}{\partial(z_\alpha)_0}\right)^{-1}_{ik}
\frac{\partial(z_\alpha)^k_t}{\partial(\bar{z}_\alpha)^j_0}d(\bar{z}_\alpha)^j_0\otimes\frac{\partial}{\partial(z_\alpha)^i_0}
\end{align*}
and
\begin{align}\label{psi}
\psi_\alpha^{kl}(t)=(w^{-1}_\alpha)^{lj}\left((\bar{\partial}-L_{\phi(t)})w_\alpha^{jk}\right)+i_{\phi(t)}\theta_\alpha^{kl},
\end{align}
where
\begin{align}\label{com-tri}
(v_\alpha)^j_t=w^{jl}_\alpha(v_\alpha)^l_0
\end{align}
under the holomorphic frame coordinate $(U_\alpha,(e_\alpha)_t,(z_\alpha)_t,(v_\alpha)_t)$ of $(X_t,E_t)$.

In \cite[Theorem 4.9]{Liu-Tu2018}, the decomposition $\xi(t)=\phi(t)+\psi(t)$ via a Chern connection divides the Maurer-Cartan equation
\begin{align*}
\bar{\partial}\xi(t)=\frac{1}{2}\left[\xi(t),\xi(t)\right]
\end{align*}
into two equations
\begin{align}\label{M-C}
\bar{\partial}\phi(t)=\frac{1}{2}\left[\phi(t),\phi(t)\right]
\end{align}
and
\begin{align}\label{2-9}
\left(\bar{\partial}-\mathcal{L}_{\phi(t)}\right)\psi(t)-\psi(t)\wedge\psi(t)-i_{\phi(t)}\Theta=0,
\end{align}
where
\begin{align*}
\mathcal{L}_{\phi(t)}=i_{\phi(t)}\circ\nabla-\nabla\circ i_{\phi(t)}.
\end{align*}

We introduce the following two propositions which will be used in later sections.

\begin{proposition}\label{pro-3}
Given a holomorphic pair deformation $\{(X_t,E_t)\}$, if the decomposition
\begin{align*}
\xi(t)=\phi(t)+\psi(t)
\end{align*}
of Beltrami differential is via an integrable connection $\nabla$ on $E_{0}$, then $\nabla+\psi(t)$ is an integrable connection on $E_{t}$.
\end{proposition}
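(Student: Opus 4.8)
The plan is to unwind the definition of integrable connection recalled above: $\nabla+\psi(t)$ is integrable on $E_t$ exactly when, with respect to the complex structure $J_t$ of $X_t$, its $(0,1)$-part equals the Dolbeault operator $\bar\partial_t$ of the holomorphic bundle $E_t$, i.e.\ $\nabla+\psi(t)=\bar\partial_t+(\nabla+\psi(t))^{(1,0)_t}$. This being a pointwise statement, I would verify it in a holomorphic frame coordinate $(U_\alpha,(e_\alpha)_t,(z_\alpha)_t,(v_\alpha)_t)$ of $(X_t,E_t)$, using the compatible trivialization $(v_\alpha)^j_t=w^{jl}_\alpha(v_\alpha)^l_0$ of \eqref{com-tri} and the explicit formula \eqref{psi} for $\psi_\alpha(t)$; here $(\,\cdot\,)^{(0,1)_t}$ denotes the $(0,1)$-component with respect to $J_t$, and $s_{(0)},s_{(t)}$ the component column vectors of a section $s$ in the frames $(e_\alpha)_0,(e_\alpha)_t$.

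First I would rewrite everything in the frame $(e_\alpha)_0$, which is \emph{not} holomorphic for $E_t$. Differentiating $s_{(t)}=w_\alpha s_{(0)}$ and converting back shows that $\bar\partial_t$ acts on $(e_\alpha)_0$-components as $\bar\partial_{J_t}+w^{-1}_\alpha\,\bar\partial_{J_t}w_\alpha$, whereas $\nabla+\psi(t)$ acts as $d+\theta_\alpha+\psi_\alpha(t)$ with $\theta_\alpha$ the connection $1$-form of $\nabla$. Since $(d)^{(0,1)_t}=\bar\partial_{J_t}$, integrability of $\nabla+\psi(t)$ on $E_t$ is equivalent to the single matrix identity
\[
\bigl(\theta_\alpha+\psi_\alpha(t)\bigr)^{(0,1)_t}=w^{-1}_\alpha\,\bar\partial_{J_t}w_\alpha .
\]

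To establish this identity I would feed in \eqref{psi}. On the entries of $w_\alpha$ the operator $L_{\phi(t)}$ reduces to $i_{\phi(t)}\partial$, since $\partial\circ i_{\phi(t)}$ annihilates functions; hence $(\bar\partial-L_{\phi(t)})w_\alpha=dw_\alpha-(\mathrm{Id}+i_{\phi(t)})\partial w_\alpha$, and because $i_{\phi(t)}$ commutes with multiplication by the $0$-form $w^{-1}_\alpha$, formula \eqref{psi} collapses to
\[
\theta_\alpha+\psi_\alpha(t)=w^{-1}_\alpha\,dw_\alpha+(\mathrm{Id}+i_{\phi(t)})\bigl(\theta_\alpha-w^{-1}_\alpha\partial w_\alpha\bigr).
\]
The only genuine input is then that $\mathrm{Id}+i_{\phi(t)}$ sends $J_0$-forms of type $(1,0)$ to $J_t$-forms of type $(1,0)$: this is immediate from the shape of the Beltrami differential, as $d(z_\alpha)^k_t=A^k_i\,(\mathrm{Id}+i_{\phi(t)})d(z_\alpha)^i_0$ for an invertible matrix $A$ of functions, so the forms $(\mathrm{Id}+i_{\phi(t)})d(z_\alpha)^i_0$ span $\Lambda^{1,0}_{J_t}$ for $t$ small, and it is precisely the restriction to $(1,0)$-forms of the extension map $e^{i_{\phi(t)}}$ (on which $i^2_{\phi(t)}=0$). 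Granting this, and using that $\nabla$ is integrable so that $\theta_\alpha$, hence $\theta_\alpha-w^{-1}_\alpha\partial w_\alpha$, is of type $(1,0)_{J_0}$, the second summand above is of type $(1,0)_{J_t}$ and contributes nothing to the $(0,1)_t$-part, leaving $\bigl(\theta_\alpha+\psi_\alpha(t)\bigr)^{(0,1)_t}=\bigl(w^{-1}_\alpha\,dw_\alpha\bigr)^{(0,1)_t}=w^{-1}_\alpha\,\bar\partial_{J_t}w_\alpha$, which is the required identity.

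The hard part is bookkeeping rather than ideas: keeping the transposition conventions consistent between ``action on $(e_\alpha)_0$-components'' and ``connection matrix'' as pinned down by \eqref{com-tri} and \eqref{psi}, and invoking correctly the fact that $\mathrm{Id}+i_{\phi(t)}$ intertwines the $(1,0)$-parts of $J_0$ and $J_t$, which is where the smallness of $\Delta$ enters. As a sanity check I would note that the coordinate-free form of the conclusion, $\bigl((\nabla+\psi(t))^{(0,1)_t}\bigr)^2=0$, should be exactly the ``vertical'' Maurer--Cartan equation \eqref{2-9} rewritten in $J_t$-types, paired with \eqref{M-C} as the integrability of $J_t$ itself; but only the local identity above is needed for the proof.
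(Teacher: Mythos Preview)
Your proposal is correct and follows essentially the same route as the paper. The paper applies $\nabla$ directly to the $t$-holomorphic frame $(e_\alpha)_t^k=(w_\alpha^{-1})^{lk}(e_\alpha)_0^l$ and, using \eqref{psi}, rewrites the result as $(\mathrm{Id}+i_{\phi(t)})\alpha-\psi(t)((e_\alpha)_t^k)$ with $\alpha$ of type $(1,0)_{J_0}$; you instead phrase the same computation in terms of connection matrices on the $(e_\alpha)_0$-frame and arrive at the equivalent identity $(\theta_\alpha+\psi_\alpha(t))^{(0,1)_t}=w_\alpha^{-1}\bar\partial_{J_t}w_\alpha$. In both cases the two ingredients are formula \eqref{psi} and the fact that $\mathrm{Id}+i_{\phi(t)}$ carries $(1,0)_{J_0}$-forms to $(1,0)_{J_t}$-forms.
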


\begin{proof}
Under the frame coordinate $\left(U_{\alpha},(e_{\alpha})_{t},(x_{\alpha})_{t},(v_{\alpha})_{t}\right)$ of $(X_t,E_{t})$, the smooth transition between two holomorphic basis is
\begin{align*}
(e_{\alpha})_{t}^{k}=(w_{\alpha}^{-1})^{lk}(e_{\alpha})_{0}^{l}
\end{align*}
as (\ref{com-tri}).

Then, by the representation of $\psi(t)$ in (\ref{psi}) we have
\begin{align*}
\nabla(e_{\alpha})_{t}^{k}&=d(w_{\alpha}^{-1})^{lk}(e_{\alpha})_{0}^{l}+(w_{\alpha}^{-1})^{lk}\theta_{\alpha}^{lj}(e_{\alpha})_{0}^{j}\\
&=\left(\partial(w_{\alpha}^{-1})^{jk}+(w_{\alpha}^{-1})^{lk}\theta_{\alpha}^{lj}\right)(e_{\alpha})_{0}^{j}
+i_{\phi(t)}\left(\partial(w_{\alpha}^{-1})^{jk}+(w_{\alpha}^{-1})^{lk}\theta_{\alpha}^{lj}\right)(e_{\alpha})_{0}^{j}\\
&\quad\quad+\left((\bar{\partial}-L_{\phi(t)})(w_{\alpha}^{-1})^{jk}-(w_{\alpha}^{-1})^{lk}i_{\phi(t)}\theta_{\alpha}^{lj}\right)(e_{\alpha})_{0}^{j}\\
&=\left(\partial(w_{\alpha}^{-1})^{jk}+(w_{\alpha}^{-1})^{lk}\theta_{\alpha}^{lj}\right)(e_{\alpha})_{0}^{j}
+i_{\phi(t)}\left(\partial(w_{\alpha}^{-1})^{jk}+(w_{\alpha}^{-1})^{lk}\theta_{\alpha}^{lj}\right)(e_{\alpha})_{0}^{j}\\
&\quad\quad-(w_{\alpha}^{-1})^{lk}\psi_{\alpha}^{lj}(t)(e_{\alpha})_{0}^{j}\\
&=\alpha+i_{\phi(t)}\alpha-\psi(t)((e_{\alpha})_{t}^{k}),
\end{align*}
where $\alpha+i_{\phi(t)}\alpha$ is bundle valued $(1,0)$-form on $X_t$. Hence, $\nabla+\psi(t)$ is an integrable connection on $(E_t,X_t)$.
\end{proof}

It was proved in Section 5 of \cite{Liu-Tu2018}, that the operator $e^{i_{\phi(t)}}$ also defines a linear isomorphism
\begin{align*}
e^{i_{\phi(t)}}:A^{n,q}(X_0,E_0)\rightarrow A^{n,q}(X_t,E_t).
\end{align*}
between $E_0$-valued $(n,q)$-forms on $X_0$ and $E_t$-valued $(n,q)$-forms on $X_t$.

Moreover, we have

\begin{proposition}\label{pro-2}
Given any $\sigma\in A^{n,q}(X_0,E_0)$, then $e^{i_{\phi(t)}}\sigma \in A^{n,q}(X_{t},E_t)$ and
\begin{align*}
e^{-i_\phi(t)}\circ\bar{\partial}_t\circ e^{i_\phi(t)}(\sigma)
=\bar{\partial}\sigma+\nabla^{1,0}_{E_0}\circ i_{\phi(t)}(\sigma)+\psi_{E_t}\sigma.
\end{align*}
\end{proposition}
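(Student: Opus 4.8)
The plan is to identify $\bar\partial_t$ with the $(0,1)$-part (with respect to $J_t$) of the integrable connection $\nabla+\psi(t)$ on $E_t$ furnished by Proposition \ref{pro-3}, transport the \emph{whole} connection across the isomorphism $e^{i_{\phi(t)}}$ by means of the Liu--Rao--Yang commutation formula \eqref{1-1}, and then discard the terms that vanish for bidegree reasons because $n=\dim_{\mathbb{C}}X_0$ is the top holomorphic degree. The membership $e^{i_{\phi(t)}}\sigma\in A^{n,q}(X_t,E_t)$ is already the content of Section 5 of \cite{Liu-Tu2018}, so we only have to establish the formula.

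The first reduction is that for any $\tau\in A^{n,q}(X_t,E_t)$ one has $(\nabla+\psi(t))\tau=\bar\partial_t\tau$ with no projection needed. Indeed, $\nabla+\psi(t)$ is a connection on the smooth bundle underlying $E_t$ over the smooth manifold $X_0\cong X_t$, so it splits into its $(1,0)_t$- and $(0,1)_t$-parts; by Proposition \ref{pro-3} the $(0,1)_t$-part is exactly $\bar\partial_t$, while the $(1,0)_t$-part raises the $J_t$-holomorphic degree and hence sends $\tau$ to an $(n+1,q)$-form on $X_t$, which is zero. Consequently
\begin{align*}
e^{-i_{\phi(t)}}\circ\bar\partial_t\circ e^{i_{\phi(t)}}(\sigma)
=e^{-i_{\phi(t)}}\circ\nabla\circ e^{i_{\phi(t)}}(\sigma)
+e^{-i_{\phi(t)}}\circ\psi(t)\circ e^{i_{\phi(t)}}(\sigma).
\end{align*}

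For the second summand I would note that $i_{\phi(t)}$ acts as an even derivation of $A^{\bullet}(X_0,E_0)$ which only contracts $(1,0)$-covectors and leaves the bundle factor untouched, whereas $\psi(t)$ wedges a form of type $(0,1)$ and acts on the bundle factor; since $i_{\phi(t)}$ annihilates $(0,1)$-forms, the operators $i_{\phi(t)}$ and $\psi(t)\wedge(-)$ commute, hence so do $e^{i_{\phi(t)}}$ and $\psi(t)\wedge(-)$, and the second summand reduces to $\psi_{E_t}\sigma$. For the first summand I would apply formula \eqref{1-1} to the Chern connection $\nabla$ on $E_0$; because $\phi(t)$ solves the Maurer--Cartan equation \eqref{M-C}, the term $i_{\bar\partial\phi(t)-\frac{1}{2}[\phi(t),\phi(t)]}$ drops out, leaving
\begin{align*}
e^{-i_{\phi(t)}}\circ\nabla\circ e^{i_{\phi(t)}}
=\nabla-\mathcal{L}^{1,0}_{\phi(t)}
=\bar\partial+\nabla^{1,0}_{E_0}-i_{\phi(t)}\circ\nabla^{1,0}_{E_0}+\nabla^{1,0}_{E_0}\circ i_{\phi(t)}.
\end{align*}
Evaluating this on $\sigma\in A^{n,q}(X_0,E_0)$, both $\nabla^{1,0}_{E_0}\sigma$ and $i_{\phi(t)}\circ\nabla^{1,0}_{E_0}\sigma$ vanish since $\nabla^{1,0}_{E_0}\sigma\in A^{n+1,q}(X_0,E_0)=0$, so only $\bar\partial\sigma+\nabla^{1,0}_{E_0}\circ i_{\phi(t)}(\sigma)$ survives; adding back $\psi_{E_t}\sigma$ yields the asserted identity.

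The step that requires the most care is the first reduction, namely checking that $(\nabla+\psi(t))e^{i_{\phi(t)}}\sigma$ is already of pure type $(n,q+1)$ with respect to $J_t$, so that passing from the full connection to $\bar\partial_t$ loses nothing; this is precisely where the top holomorphic degree $n$ is essential, whereas the remaining steps amount to bookkeeping with \eqref{1-1}, the Maurer--Cartan equation, and elementary bidegree counting.
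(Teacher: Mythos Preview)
Your proof is correct and follows essentially the same route as the paper: you use Proposition~\ref{pro-3} to replace $\bar\partial_t$ by the full connection $\nabla+\psi(t)$ on top-degree forms, invoke \eqref{1-1} together with the Maurer--Cartan equation \eqref{M-C} to simplify the $\nabla$-term, and observe that $i_{\phi(t)}$ commutes with $\psi_{E_t}\wedge(-)$ since $\psi_{E_t}$ is of type $(0,1)$. The only difference is expository: you spell out explicitly why the $(1,0)_t$-part of the connection and the terms $\nabla^{1,0}_{E_0}\sigma$, $i_{\phi(t)}\nabla^{1,0}_{E_0}\sigma$ vanish (top holomorphic degree), whereas the paper records these facts more tersely in \eqref{2-8}.
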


\begin{proof}
By Proposition \ref{pro-3}, $\nabla+\psi_{E_t}$ is an integrable connection of $(X_t,E_t)$, thus
\begin{align*}
e^{-i_\phi(t)}\circ\left(\nabla+\psi_{E_t}\right)\circ e^{i_\phi(t)}(\sigma)=e^{-i_\phi(t)}\circ\bar{\partial}_t\circ e^{i_\phi(t)}(\sigma)
\end{align*}
for any $\sigma\in A^{n,q}(X_0,E_0)$.

Because $\phi(t)$ satisfies Maurer-Cartan equation (\ref{M-C}), (\ref{1-1}) becomes
\begin{equation}
e^{-i_{\phi(t)}}\circ\nabla\circ e^{i_{\phi(t)}}=\nabla-\mathcal{L}_{\phi(t)}^{1,0}.
\end{equation}
In particular, when $\sigma\in A^{n,q}(X_0,E_0)$, we have
\begin{equation}\label{2-8}
\left(e^{-i_{\phi(t)}}\circ\nabla\circ e^{i_{\phi(t)}}\right)\sigma=\bar{\partial}\sigma+\nabla^{1,0}_{E_0}\circ i_{\phi(t)}\sigma.
\end{equation}

On the other hand, $\psi_{E_t}$ is of type $(0,1)$, which implies
\begin{align*}
i_{\phi(t)}\circ\psi_{E_t}=\psi_{E_t}\circ i_{\phi(t)}.
\end{align*}

Hence, we have
\begin{align*}
e^{-i_\phi(t)}\circ\bar{\partial}_t\circ e^{i_\phi(t)}(\sigma)
=e^{-i_\phi(t)}\circ\left(\nabla+\psi_{E_t}\right)\circ e^{i_\phi(t)}(\sigma)
=\bar{\partial}\sigma+\nabla^{1,0}_{E_0}\circ i_{\phi(t)}(\sigma)+\psi_{E_t}\sigma.
\end{align*}
\end{proof}

\section{The Correspondence between $(p,q)$-forms}

In this section, we will use the compatible trivialization of holomorphic cotangent bundle to get a correspondence between $\bar{\partial}_t$-complex of $(p,q)$-form on $X_t$ and $\bar{\partial}_0$-complex of $(p,q)$-forms on $X_0$.

Recall that the compatible trivialization of pairs $\{(X_t,\Omega_t)\}$ in Section 6 of \cite{Liu-Tu2018} is
\begin{align*}
\pi^{1,0}:\Omega_t\rightarrow\Omega_0,\quad d(z_{\alpha})_{t}^{k}\mapsto\frac{\partial(z_{\alpha})_{t}^{k}}{\partial(z_{\alpha})_{0}^{j}}d(z_{\alpha})_{0}^{j},
\end{align*}
where $\Omega_t$ is the holomorphic cotangent bundle of $X_t$, and $(z_{\alpha})_{t}$ is the holomorphic local coordinate of $X_t$.

Suppose that $\phi(t)$ is the Beltrami differential of $X_t$ and $(z^i)$ is a fixed holomorphic coordinate of $X_0$, then we proved in Section 6 of \cite{Liu-Tu2018} that the decomposition of Beltrami differential
\begin{align*}
\xi_{(X_t,\Omega_t)}=\phi(t)+\psi_{\Omega_t}
\end{align*}
via a Chern connection $\nabla$ satisfies
\begin{align*}
\psi^{kl}_{\Omega_t}=-\partial_l\phi^k-\Gamma^k_{jl}\phi^j,\quad
\phi=\phi^i_jd\bar{z}^j\otimes\frac{\partial}{\partial z_i}=\phi^i\frac{\partial}{\partial z_i},
\end{align*}
where
\begin{align*}
\nabla_{\frac{\partial}{\partial z_i}}\frac{\partial}{\partial z_j}=\Gamma^k_{ij}\frac{\partial}{\partial z_k}.
\end{align*}

Suppose that $\Omega^p_t$ is the $p$-times wedge product of holomorphic cotangent bundle $\Omega_t$, and $K_t$ is the canonical bundle of $X_t$.

\begin{proposition}
For the pair deformations $\{(X_t,\Omega^p_t)\}$ and $\{(X_t,K_t)\}$ we have
\begin{align*}
\psi_{\Omega^p_t}\left(dz^I\right)
=\partial\circ i_\phi dz^I+i_\phi\circ\nabla^{1,0}_{\Omega^p_0}(dz^I)
\end{align*}
and
\begin{align*}
\psi_{K^{-1}_t}\left(\frac{1}{\langle dz\rangle}\right)
=\partial_l\phi^l_md\bar{z}^m\left(\frac{1}{\langle dz\rangle}\right)+i_\phi\circ\nabla^{1,0}_{K^{-1}_0}\left(\frac{1}{\langle dz\rangle}\right),
\end{align*}
where
\begin{align*}
\langle dz\rangle=dz^1\wedge\cdots\wedge dz^n.
\end{align*}
\end{proposition}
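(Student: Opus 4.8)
The plan is to derive both identities from the case $p=1$ recalled above, namely the formula $\psi^{kl}_{\Omega_t}=-\partial_l\phi^k-\Gamma^k_{jl}\phi^j$, by exploiting that the decomposition $\xi(t)=\phi(t)+\psi(t)$ via the Chern connection is compatible with the natural operations $\Omega_0\mapsto\Omega_0^p=\wedge^p\Omega_0$ and $\Omega_0\mapsto(\wedge^n\Omega_0)^{-1}=K_0^{-1}$. By Proposition \ref{pro-3}, $\nabla+\psi(t)$ is the integrable connection underlying the deformed holomorphic structure; the integrable connection on $\wedge^p\Omega_t$ is the $p$-th derivation power of the one on $\Omega_t$, and the integrable connection on $K_t^{-1}$ is minus the trace of it, and the same holds for $\nabla$ over $\Omega_0$ and for $\nabla^{1,0}_{\Omega^p_0}$, $\nabla^{1,0}_{K^{-1}_0}$ over $\nabla^{1,0}_{\Omega_0}$; since $\psi(t)$ is purely of type $(0,1)$ it is determined by the $(0,1)$-part of $\nabla+\psi(t)$, so subtracting the undeformed parts yields that $\psi_{\Omega^p_t}$ acts on decomposable sections by the Leibniz rule over $\psi_{\Omega_t}$, and that $\psi_{K^{-1}_t}=-\operatorname{tr}\psi_{\Omega_t}$. (The same can be read off (\ref{psi}): for $E_t=\Omega^p_t$ the transition factor $w_\alpha$ of (\ref{com-tri}) is the appropriate $p$-th exterior power of the Jacobian of $(z_\alpha)_0\mapsto(z_\alpha)_t$, for $E_t=K^{-1}_t$ it is a power of its determinant, and $\theta_\alpha$ is replaced by the induced connection form, while $\phi(t)$ is unchanged.)

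Granting this, I would first put the $p=1$ formula into invariant form. Since $dz^k$ is $\partial$-closed, $\partial\circ i_\phi(dz^k)=\partial\phi^k$, whose local expression reproduces the term $-\partial_l\phi^k$ once the $(0,1)$-factor is moved in front; and writing $\nabla^{1,0}_{\Omega_0}(dz^k)=-\Gamma^k_{lj}\,dz^l\otimes dz^j$ with differential-form factor $dz^l$ and bundle factor $dz^j$, contracting the form factor against $\phi$ gives $i_\phi\circ\nabla^{1,0}_{\Omega_0}(dz^k)=-\Gamma^k_{lj}\phi^l\otimes dz^j$, which is the term $-\Gamma^k_{jl}\phi^j$ after relabelling. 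Hence $\psi_{\Omega_t}(dz^k)=\partial\circ i_\phi(dz^k)+i_\phi\circ\nabla^{1,0}_{\Omega_0}(dz^k)$. For general $p$ I would expand the left-hand side of the first claimed identity by the Leibniz rule
\begin{align*}
\psi_{\Omega^p_t}(dz^{i_1}\wedge\cdots\wedge dz^{i_p})=\sum_{a=1}^{p}(-1)^{a-1}\,\psi_{\Omega_t}(dz^{i_a})\wedge dz^{i_1}\wedge\cdots\wedge\widehat{dz^{i_a}}\wedge\cdots\wedge dz^{i_p},
\end{align*}
and check that $\partial\circ i_\phi+i_\phi\circ\nabla^{1,0}_{\Omega^p_0}$ distributes over $dz^I=dz^{i_1}\wedge\cdots\wedge dz^{i_p}$ in the same fashion: using $\partial\,dz^i=0$, that $i_\phi$ is an antiderivation of bidegree $(-1,1)$, that $\partial$ is a derivation, and that $\nabla^{1,0}_{\Omega^p_0}$ is the derivation extension of $\nabla^{1,0}_{\Omega_0}$, one gets that both $\partial\circ i_\phi(dz^I)$ and $i_\phi\circ\nabla^{1,0}_{\Omega^p_0}(dz^I)$ equal the alternating sum over $a$ of $\partial\circ i_\phi(dz^{i_a})$, respectively $i_\phi\circ\nabla^{1,0}_{\Omega_0}(dz^{i_a})$, wedged with the remaining factors. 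Adding these and using the $p=1$ identity term by term gives the first formula.

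For the anticanonical bundle I would use $\psi_{K^{-1}_t}=-\operatorname{tr}\psi_{\Omega_t}$, so that
\begin{align*}
\psi_{K^{-1}_t}\!\left(\tfrac{1}{\langle dz\rangle}\right)=-\sum_{k}\psi^{kk}_{\Omega_t}\left(\tfrac{1}{\langle dz\rangle}\right)=\Big(\partial_k\phi^k_m\,d\bar z^m+\sum_{j,k}\Gamma^k_{jk}\phi^j\Big)\left(\tfrac{1}{\langle dz\rangle}\right),
\end{align*}
and then recognize the two summands: $\partial_k\phi^k_m\,d\bar z^m=\partial_l\phi^l_m\,d\bar z^m$ is the first term of the claimed formula, while from $\nabla^{1,0}_{K^{-1}_0}(1/\langle dz\rangle)=\Gamma^k_{lk}\,dz^l\otimes(1/\langle dz\rangle)$ (obtained by differentiating $\langle dz\rangle^{-1}$, using $\nabla^{1,0}_{\partial_l}dz^k=-\Gamma^k_{lj}dz^j$) one gets $i_\phi\circ\nabla^{1,0}_{K^{-1}_0}(1/\langle dz\rangle)=\sum_{k,l}\Gamma^k_{lk}\phi^l\,(1/\langle dz\rangle)$, which is the second term. (Alternatively one may verify directly from (\ref{psi}) with $w_\alpha=(\det J_\alpha)^{-1}$ that $\partial_l\phi^l_m d\bar z^m$ is precisely the effect of ``$\partial\circ i_\phi$'' on $\langle dz\rangle$ transported to $K^{-1}_t$, but this requires the coordinate identity relating $\bar\partial\log\det J_\alpha$, $(\partial_l\log\det J_\alpha)\phi^l$ and $\partial_l\phi^l$, which the trace argument bypasses.)

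The step I expect to be the main obstacle is making this ``naturality of $\psi$'' precise and keeping all signs consistent: one must track that $i_\phi$ is an antiderivation and that $\psi(t)$ and $\phi(t)$ carry an odd $(0,1)$-form degree when they are distributed over wedge products and over the inverse canonical section, and one must confirm that the matching Leibniz behaviour of $\partial\circ i_\phi+i_\phi\circ\nabla^{1,0}$ on the $\partial$-closed generators $dz^i$ holds without invoking any symmetry of the Christoffel symbols, so that the argument is valid for the Chern connection, which need not be torsion-free. Once these Leibniz and trace properties are pinned down, both formulas follow formally from the $p=1$ case.
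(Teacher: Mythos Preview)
Your proposal is correct and follows essentially the same route as the paper. The paper's proof is terser: it invokes \cite[Corollary 4.4]{Liu-Tu2018} to get the Leibniz rule $\psi_{\Omega^p_t}(dz^I)=\sum_k(-1)^{k-1}dz^{i_1}\wedge\cdots\wedge\psi_{\Omega_t}(dz^{i_k})\wedge\cdots\wedge dz^{i_p}$ and the trace formula for $\psi_{K^{-1}_t}$ directly, then plugs in the local expression $\psi^{kl}_{\Omega_t}=-\partial_l\phi^k-\Gamma^k_{jl}\phi^j$ and recognizes the two pieces; you instead justify the Leibniz/trace behaviour of $\psi$ from the naturality of the integrable connection $\nabla+\psi(t)$ (Proposition~\ref{pro-3}) and then carry out the same identification term by term, additionally verifying that the right-hand side $\partial\circ i_\phi+i_\phi\circ\nabla^{1,0}$ distributes the same way over $dz^I$, which the paper leaves implicit.
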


\begin{proof}
By \cite[Corollary 4.4]{Liu-Tu2018}, we have
\begin{align*}
\psi_{\Omega^p_t}\left(dz^I\right)
&=\sum_k(-1)^{k-1}dz^{i_1}\wedge\cdots\wedge\psi_{\Omega_t}\left(dz^{i_k}\right)\wedge\cdots\wedge dz^{i_p}\\
&=\sum_{k}(-1)^{k-1}dz^{i_1}\wedge\cdots\wedge\left(-\partial_l\phi^{i_k}_{m} d\bar{z}^m
-\Gamma^{i_k}_{jl}\phi^j_md\bar{z}^m\right)\wedge dz^l\wedge\cdots\wedge dz^{i_p}\\
&=\partial\circ i_\phi dz^I+i_\phi\circ\nabla^{1,0}_{\Omega^p_0}(dz^I),
\end{align*}
and
\begin{align*}
\psi_{K^{-1}_t}\left(\frac{1}{\langle dz\rangle}\right)
&=\left(\partial_l\phi^l_md\bar{z}^m+\Gamma^l_{jl}\phi^j_{m}d\bar{z}^m\right)\otimes\frac{1}{\langle dz\rangle}\\
&=\partial_l\phi^l_md\bar{z}^m\left(\frac{1}{\langle dz\rangle}\right)+i_\phi\circ\nabla^{1,0}_{K^{-1}_0}\left(\frac{1}{\langle dz\rangle}\right).
\end{align*}
\end{proof}

Moreover, for the pair deformation $\{(X_t,\Omega_t^p\otimes K_t^{-1})\}$ we have
\begin{corollary}
\begin{align*}
&\psi_{\Omega^p_t\otimes K^{-1}_t}\left(dz^I\otimes\frac{1}{\langle dz\rangle}\right)\\
&=\psi_{\Omega^p_t}\left(dz^I\right)\otimes\frac{1}{\langle dz\rangle}
+dz^I\otimes\psi_{K^{-1}_t}\left(\frac{1}{\langle dz\rangle}\right)\\
&=\partial\circ i_\phi dz^I\otimes\frac{1}{\langle dz\rangle}
+\partial_l\phi^l_md\bar{z}^m\otimes dz^I\otimes\frac{1}{\langle dz\rangle}
+i_{\phi(t)}\circ\nabla^{1,0}_{\Omega^p_0\otimes K^{-1}_0}\left(dz^I\otimes\frac{1}{\langle dz\rangle}\right).
\end{align*}
\end{corollary}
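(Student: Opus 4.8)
The plan is to reduce the whole statement to two ingredients: the derivation property of $\psi$ under tensor products of pair deformations, and the Leibniz rule for the Chern connection on $\Omega^p_0\otimes K^{-1}_0$, through which the interior product $i_{\phi(t)}$ passes harmlessly. No new estimate is needed; everything rests on the Proposition just proved.

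First I would establish the first equality. It should follow from \cite[Corollary 4.4]{Liu-Tu2018} (the same tool used in the preceding Proposition): if $\{(X_t,\Omega^p_t)\}$, $\{(X_t,K^{-1}_t)\}$ and $\{(X_t,\Omega^p_t\otimes K^{-1}_t)\}$ all carry the decomposition of the Beltrami differential induced by the corresponding Chern connections, and if the Chern connection of $\Omega^p_0\otimes K^{-1}_0$ for the tensor product Hermitian metric is the tensor product connection, then $\psi$ is a derivation on decomposable elements, i.e.
\begin{align*}
\psi_{\Omega^p_t\otimes K^{-1}_t}\left(dz^I\otimes\frac{1}{\langle dz\rangle}\right)
=\psi_{\Omega^p_t}\left(dz^I\right)\otimes\frac{1}{\langle dz\rangle}
+dz^I\otimes\psi_{K^{-1}_t}\left(\frac{1}{\langle dz\rangle}\right).
\end{align*}
The compatibility of the three decompositions is exactly what makes Corollary 4.4 applicable here, and it is forced by the functoriality of Chern connections under tensor products with the induced metric, so this step is routine.

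Next I would substitute the two formulas from the previous Proposition, $\psi_{\Omega^p_t}(dz^I)=\partial\circ i_\phi dz^I+i_\phi\circ\nabla^{1,0}_{\Omega^p_0}(dz^I)$ and $\psi_{K^{-1}_t}(1/\langle dz\rangle)=\partial_l\phi^l_md\bar{z}^m(1/\langle dz\rangle)+i_\phi\circ\nabla^{1,0}_{K^{-1}_0}(1/\langle dz\rangle)$, into the right-hand side. This yields four terms: the two ``divergence'' terms $\partial\circ i_\phi dz^I\otimes\frac{1}{\langle dz\rangle}$ and $\partial_l\phi^l_md\bar{z}^m\otimes dz^I\otimes\frac{1}{\langle dz\rangle}$, together with $i_\phi\circ\nabla^{1,0}_{\Omega^p_0}(dz^I)\otimes\frac{1}{\langle dz\rangle}+dz^I\otimes i_\phi\circ\nabla^{1,0}_{K^{-1}_0}(1/\langle dz\rangle)$. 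To finish I would recognize this last pair as $i_{\phi(t)}\circ\nabla^{1,0}_{\Omega^p_0\otimes K^{-1}_0}(dz^I\otimes\frac{1}{\langle dz\rangle})$, using the Leibniz identity $\nabla^{1,0}_{\Omega^p_0\otimes K^{-1}_0}(s\otimes t)=\nabla^{1,0}_{\Omega^p_0}s\otimes t+s\otimes\nabla^{1,0}_{K^{-1}_0}t$ and the fact that $i_{\phi(t)}$ contracts only the form component and acts as the identity on the coefficient (bundle) component, hence distributes over the sum.

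The only real obstacle is bookkeeping: one must make sure the decomposition $\xi=\phi+\psi$ used on the tensor product bundle is genuinely the one induced by the tensor product Chern connection — so that the derivation identity of Corollary 4.4 is legitimately available — and that the $(1,0)$/$(0,1)$ type splitting is respected when $i_{\phi(t)}$ is commuted through the tensor factors. Once these compatibilities are in place, the three displayed lines of the Corollary are just the chain of substitutions described above.
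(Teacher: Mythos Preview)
Your proposal is correct and is exactly the argument the paper has in mind: the Corollary is stated without proof because it is immediate from the preceding Proposition together with the derivation property of $\psi$ on tensor products (\cite[Corollary 4.4]{Liu-Tu2018}) and the Leibniz rule for the tensor-product Chern connection. Your two-step reduction---first split $\psi_{\Omega^p_t\otimes K^{-1}_t}$ via the derivation rule, then recombine the $i_\phi\circ\nabla^{1,0}$ terms via Leibniz---is precisely the intended unpacking.
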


At first, we need to establish the commutative diagram
\begin{align*}
\xymatrix{
A^{p,q}(X_t) \ar[rr]^{I^{p,q}_t}\ar[d]_{\bar{\partial}_t}& & A^{n,q}(X_t,\Omega_t^p\otimes K_t^{-1}) \ar[d]_{\bar{\partial}_t}\\
A^{p,q+1}(X_t) \ar[rr]^{I^{p,q+1}_t}& &  A^{n,q+1}(X_t,\Omega_t^p\otimes K_t^{-1}).
}
\end{align*}

\begin{proposition}
There is an isomorphism $I^{p,q}_t$ such that the above diagram is commutative.
\end{proposition}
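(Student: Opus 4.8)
The plan is to construct $I^{p,q}_t$ explicitly as a ``multiplication by $\langle dz_t\rangle$'' type map and then verify commutativity by a local computation. Concretely, on the fiber $X_t$ with holomorphic local coordinates $(z_\alpha)_t$, a $(p,q)$-form $s$ can be written locally as $s=s_{I\bar{J}}\,dz_t^I\wedge d\bar{z}_t^J$, and I would set
\begin{align*}
I^{p,q}_t(s):=s\otimes\langle dz_t\rangle^{-1}\otimes\langle dz_t\rangle
\end{align*}
more precisely, the natural identification $A^{p,q}(X_t)\cong A^{n,q}(X_t,\Omega^p_t\otimes K_t^{-1})$ that sends $\omega$ to $\omega\wedge\langle dz_t\rangle^{\,\vee}\!\otimes\langle dz_t\rangle$, where $\langle dz_t\rangle=dz_t^1\wedge\cdots\wedge dz_t^n$ trivializes $K_t$ locally and $\langle dz_t\rangle^\vee$ is the dual frame of $K_t^{-1}$. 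The first step is to check this is well-defined globally: although $\langle dz_t\rangle$ depends on the coordinate chart, the tensor $dz_t^I\otimes\langle dz_t\rangle^{\vee}$ transforms correctly as a section of $\Omega^p_t\otimes K_t^{-1}$, and wedging an $(n,q)$-form with $\langle dz_t\rangle^\vee$ against a $(p,q)$-form reproduces the original data; I would phrase this as saying $I^{p,q}_t$ is the canonical isomorphism $\wedge^p T^*_{1,0}\otimes\wedge^q T^*_{0,1}\cong \wedge^n T^*_{1,0}\otimes (\wedge^n T^*_{1,0})^{-1}\otimes\wedge^p T^*_{1,0}\otimes\wedge^q T^*_{0,1}$, which is manifestly intrinsic.

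The second step is the commutativity $\bar{\partial}_t\circ I^{p,q}_t = I^{p,q+1}_t\circ\bar{\partial}_t$. Here I would use that $\bar{\partial}_t$ on the twisted bundle $\Omega^p_t\otimes K_t^{-1}$ is defined via the holomorphic structure, so $\bar{\partial}_t$ annihilates the holomorphic frame $dz_t^I\otimes\langle dz_t\rangle^\vee$. Hence for $s=s_{I\bar J}\,dz_t^I\wedge d\bar z_t^J$,
\begin{align*}
\bar{\partial}_t\,I^{p,q}_t(s)
=\bar{\partial}_t\!\left(s_{I\bar J}\,d\bar z_t^J\otimes dz_t^I\otimes\langle dz_t\rangle^\vee\otimes\langle dz_t\rangle\right)
=\left(\bar{\partial}_t s_{I\bar J}\wedge d\bar z_t^J\right)\otimes dz_t^I\otimes\langle dz_t\rangle^\vee\otimes\langle dz_t\rangle,
\end{align*}
which is exactly $I^{p,q+1}_t(\bar{\partial}_t s)$ since $\bar{\partial}_t s=\bar{\partial}_t s_{I\bar J}\wedge d\bar z_t^J\wedge dz_t^I$ up to the usual sign, and $I^{p,q+1}_t$ repackages it the same way. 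The point is that both $\bar{\partial}_t$ operators (on untwisted forms and on $\Omega^p_t\otimes K_t^{-1}$-valued $(n,q)$-forms) act only on the anti-holomorphic coefficient functions with respect to this holomorphic frame, so they intertwine under $I^{p,q}_t$ tautologically. Finally I would note that $I^{p,q}_t$ is an isomorphism because it has an obvious inverse, namely contracting the $K_t^{-1}$-factor against the $\Omega^n_t$-part and pairing.

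The main obstacle I anticipate is bookkeeping rather than conceptual: getting the signs right when moving $d\bar z_t^J$ past $dz_t^I$ and keeping the $K_t$-vs-$K_t^{-1}$ pairing consistent, and presenting the global well-definedness cleanly without drowning in transition-function computations (the cocycle for $\Omega^p_t\otimes K_t^{-1}$ must be shown to match that of the source so that $I^{p,q}_t$ glues). One clean way to sidestep most of it is to define $I^{p,q}_t$ invariantly as the composition of the identity $\Lambda^{p,q}=\Lambda^{p,q}$ with the canonical trivialization $\mathcal{O}_{X_t}\cong K_t\otimes K_t^{-1}$ tensored in, so that commutativity with $\bar{\partial}_t$ follows from $\bar{\partial}_t$ being a derivation that kills the canonical section of $K_t\otimes K_t^{-1}$; then only a one-line local check is needed, matching the explicit formula above.
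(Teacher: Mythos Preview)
Your proposal is correct and follows essentially the same route as the paper: both define $I^{p,q}_t$ locally by repackaging $s_{I\bar J}\,d\bar z^J\wedge dz^I$ as $s_{I\bar J}\,d\bar z^J\wedge\langle dz\rangle\otimes dz^I\otimes\langle dz\rangle^{-1}$ and check commutativity by observing that $\bar\partial_t$ acts only on the coefficient functions since the frames $dz^I$, $\langle dz\rangle$, $\langle dz\rangle^{-1}$ are holomorphic. Your invariant description via tensoring with the canonical section of $K_t\otimes K_t^{-1}$ is a nice addition that makes global well-definedness transparent (the paper leaves this implicit), but otherwise the argument is the same one-line local verification.
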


\begin{proof}
Given any $t\in\Delta$, fix a holomorphic coordinate $(z^i)$ on $X_t$, then a $(p,q)$-form $s$ on $X_t$ is represented locally as
\begin{align*}
s=\sum_{I,J}s_{I,\bar{J}}d\bar{z}^J\wedge dz^I,\quad dz^I=dz^{i_1}\wedge\cdots\wedge dz^{i_p},\quad d\bar{z}^J=d\bar{z}^{j_1}\wedge\cdots\wedge d\bar{z}^{j_q}.
\end{align*}
the operator $I^{p,q}$ is defined as
\begin{align*}
I^{p,q}s:=\sum_{I,J}s_{I,\bar{J}}d\bar{z}^J\wedge\langle dz\rangle\otimes dz^I\otimes\frac{1}{\langle dz\rangle},
\quad \langle dz\rangle=dz^1\wedge\cdots\wedge dz^n.
\end{align*}

Hence,
\begin{align*}
I^{p,q+1}\circ\bar{\partial}_t s
&=I^{p,q+1}\left(\sum_{I,J,k}\frac{\partial s_{I,\bar{J}}}{\partial\bar{z}^k}d\bar{z}_k\wedge d\bar{z}^J\wedge dz^I\right)\\
&=\sum_{I,J,k}\frac{\partial s_{I,\bar{J}}}{\partial\bar{z}^k}d\bar{z}_k\wedge d\bar{z}^J\wedge\langle dz\rangle\otimes dz^I\otimes\frac{1}{\langle dz\rangle}\\
&=\bar{\partial}_t\circ I^{p,q} s.
\end{align*}
\end{proof}

Then, denote
\begin{align*}
P_t=\left(I^{p,q}_t\right)^{-1}\circ e^{i_{\phi(t)}}\circ I^{p,q}_0,
\end{align*}
the diagram
\begin{align*}
\xymatrix{
A^{p,q}(X_0) \ar[rr]^{I^{p,q}_0}\ar[d]_{P_t}& & A^{n,q}(X_0,\Omega_0^p\otimes K_0^{-1}) \ar[d]_{e^{i_{\phi(t)}}}\\
A^{p,q}(X_t) \ar[rr]^{I^{p,q}_t}& &  A^{n,q}(X_t,\Omega_t^p\otimes K_t^{-1})
}
\end{align*}
is commutative.

\begin{theorem}\label{thm-1}
For any $(p,q)$-form $s\in A^{p,q}(X_0)$, we have
\begin{align*}
\bar{\partial}_t\circ P_t (s)=P_t\circ\left(\bar{\partial}_0-L^{1,0}_{\phi(t)}\right)s.
\end{align*}
\end{theorem}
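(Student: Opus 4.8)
The plan is to reduce everything to the already-established transfer formula in Proposition~\ref{pro-2}, by conjugating the vector-bundle Dolbeault operator on the pair $\{(X_t,\Omega_t^p\otimes K_t^{-1})\}$ through the isomorphisms $I^{p,q}_t$ and $e^{i_{\phi(t)}}$ that make up $P_t$. First I would recall that, by the commutative diagram immediately preceding the theorem together with the square defining $I^{p,q}_t$ on each fibre, we have the identity
\begin{align*}
\bar{\partial}_t\circ P_t
= \bar{\partial}_t\circ (I^{p,q}_t)^{-1}\circ e^{i_{\phi(t)}}\circ I^{p,q}_0
= (I^{p,q+1}_t)^{-1}\circ\bar{\partial}_t\circ e^{i_{\phi(t)}}\circ I^{p,q}_0,
\end{align*}
where the $\bar{\partial}_t$ on the right is the Dolbeault operator of the bundle $\Omega_t^p\otimes K_t^{-1}$. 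Then I would apply Proposition~\ref{pro-2} (with $E_t$ replaced by $\Omega_t^p\otimes K_t^{-1}$, whose fibre over $t=0$ is $\Omega_0^p\otimes K_0^{-1}$) to rewrite $e^{-i_{\phi(t)}}\circ\bar{\partial}_t\circ e^{i_{\phi(t)}}$ acting on $I^{p,q}_0 s$ as $\bar{\partial}+\nabla^{1,0}\circ i_{\phi(t)}+\psi_{\Omega_t^p\otimes K_t^{-1}}$, so that
\begin{align*}
\bar{\partial}_t\circ P_t(s)
= (I^{p,q+1}_t)^{-1}\circ e^{i_{\phi(t)}}\circ\Big(\bar{\partial}+\nabla^{1,0}_{\Omega_0^p\otimes K_0^{-1}}\circ i_{\phi(t)}+\psi_{\Omega_t^p\otimes K_t^{-1}}\Big)\big(I^{p,q}_0 s\big).
\end{align*}
Since $P_t=(I^{p,q+1}_t)^{-1}\circ e^{i_{\phi(t)}}\circ I^{p,q}_0$ in degree $q+1$ as well, it now suffices to show that the bracketed operator, transported back through $I^{p,q}_0$, equals $I^{p,q}_0\circ(\bar{\partial}_0-L^{1,0}_{\phi(t)})$; that is, one needs the local identity
\begin{align*}
\Big(\bar{\partial}+\nabla^{1,0}\circ i_{\phi(t)}+\psi_{\Omega_t^p\otimes K_t^{-1}}\Big)\big(I^{p,q}_0 s\big)
= I^{p,q}_0\big((\bar{\partial}_0 - L^{1,0}_{\phi(t)})s\big).
\end{align*}

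The core of the argument is therefore the purely local computation of the right-hand side above, and here I would use the Corollary preceding this section, which gives the explicit form of $\psi_{\Omega_t^p\otimes K_t^{-1}}$ on the frame $dz^I\otimes\frac{1}{\langle dz\rangle}$. Writing $s=\sum_{I,J}s_{I,\bar J}\,d\bar z^J\wedge dz^I$ and $I^{p,q}_0 s=\sum s_{I,\bar J}\,d\bar z^J\wedge\langle dz\rangle\otimes dz^I\otimes\frac{1}{\langle dz\rangle}$, the $\bar{\partial}$ term produces the ordinary $\bar{\partial}_0$ of the coefficients (this is exactly the square already proved for $I^{p,q}$); the $\nabla^{1,0}\circ i_{\phi(t)}$ term and the $i_{\phi(t)}\circ\nabla^{1,0}$ piece hidden inside $\psi_{\Omega_t^p\otimes K_t^{-1}}$ (via the Corollary) combine, when acting on $\langle dz\rangle\otimes dz^I\otimes\frac{1}{\langle dz\rangle}$, to a commutator of the shape $\partial\circ i_{\phi(t)}-i_{\phi(t)}\circ\partial$; and the remaining $\partial\circ i_\phi dz^I\otimes\frac{1}{\langle dz\rangle}+\partial_l\phi^l_m d\bar z^m\otimes dz^I\otimes\frac{1}{\langle dz\rangle}$ contributions from the Corollary must be matched against $L^{1,0}_{\phi(t)}=i_{\phi(t)}\circ\partial-\partial\circ i_{\phi(t)}$ acting on $dz^I$, taking care that the factor $\langle dz\rangle\otimes\frac{1}{\langle dz\rangle}$ in $I^{p,q}_0$ contributes the divergence term $\partial_l\phi^l_m$ that cancels the anomaly. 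Collecting all terms, the $\Omega^p$-part yields $-\partial\circ i_{\phi(t)}$ on the form-indices while the $K^{-1}$-part supplies the $+i_{\phi(t)}\circ\partial$ correction on the same indices (these are the two halves of $-L^{1,0}_{\phi(t)}$), and the divergence terms cancel identically.

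The main obstacle I anticipate is the bookkeeping of the canonical-bundle factor: because $I^{p,q}_0$ artificially inserts $\langle dz\rangle$ on the left and $\frac{1}{\langle dz\rangle}$ on the right, one must verify that the action of $\nabla^{1,0}$ (resp.\ $\psi$) on this inserted pair cancels—i.e.\ that the Christoffel-symbol contribution $\Gamma^l_{jl}\phi^j_m$ from $\psi_{K^{-1}_t}$ and the $\Gamma^{i_k}_{jl}$ contributions from $\psi_{\Omega^p_t}$ do not survive—leaving only the honest $\partial$-type operators $L^{1,0}_{\phi(t)}$ and $\bar{\partial}_0$ intact. Once this cancellation is confirmed on one chart, the fact that $I^{p,q}_t$, $e^{i_{\phi(t)}}$ and the operators involved are all globally defined (the former by Section~6 of \cite{Liu-Tu2018}, the latter by Proposition~\ref{pro-2}) guarantees the identity patches to a global one, completing the proof.
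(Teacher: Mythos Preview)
Your plan is essentially the paper's own proof: reduce via the $I^{p,q}_t$-square and Proposition~\ref{pro-2} to the local identity
\[
\Big(\nabla^{1,0}_{\Omega^p_0\otimes K^{-1}_0}\circ i_{\phi(t)}+\psi_{\Omega^p_t\otimes K^{-1}_t}\Big)\circ I^{p,q}_0(s)=I^{p,q+1}_0\big(-L^{1,0}_{\phi(t)}s\big),
\]
and then verify this in coordinates using the Corollary's formula for $\psi_{\Omega^p_t\otimes K^{-1}_t}$. One clarification on the bookkeeping you flagged: the Christoffel contributions do not ``combine to a commutator'' but rather cancel outright via the elementary identity $(i_\phi\langle dz\rangle)\wedge dz^k=-\langle dz\rangle\wedge i_\phi dz^k$, which forces $(i_{\phi}\sigma_I)\wedge\nabla^{1,0}\!\big(dz^I\otimes\tfrac{1}{\langle dz\rangle}\big)=-\sigma_I\wedge i_{\phi}\nabla^{1,0}\!\big(dz^I\otimes\tfrac{1}{\langle dz\rangle}\big)$ and kills the $\Gamma$-terms against the $i_\phi\circ\nabla^{1,0}$ piece of $\psi$; after that the surviving $\partial\circ i_\phi dz^I$, the divergence $\partial_l\phi^l_m$, and the expansion of $\partial\circ i_\phi\sigma_I$ assemble exactly into $I^{p,q+1}_0(-L^{1,0}_{\phi(t)}s)$.
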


\begin{proof}
By Proposition \ref{pro-2},
\begin{align*}
\bar{\partial}_t\circ P_t(s)
&=\bar{\partial}_t\circ\left(I^{p,q}_t\right)^{-1}\circ e^{i_{\phi(t)}}\circ I^{p,q}_0(s)\\
&=\left(I^{p,q+1}_t\right)^{-1}\circ\bar{\partial}_t\circ e^{i_{\phi(t)}}\circ I^{p,q}_0(s)\\
&=\left(I^{p,q+1}_t\right)^{-1}\circ e^{i_{\phi(t)}}\circ\left(\bar{\partial}_0+\nabla^{1,0}_{\Omega^p_0\otimes K^{-1}_0}\circ i_{\phi(t)}+\psi_{\Omega^p_t\otimes K^{-1}_t}\right)\circ I^{p,q}_0(s).
\end{align*}

It suffices to show that
\begin{align*}
\left(\nabla^{1,0}_{\Omega^p_0\otimes K^{-1}_0}\circ i_{\phi(t)}+\psi_{\Omega^p_t\otimes K^{-1}_t}\right)\circ I^{p,q}_0(s)
=I^{p,q+1}_0\circ\left(-L^{1,0}_{\phi(t)}\right)(s).
\end{align*}

Denote
\begin{align*}
\sigma=I^{p,q}_0s\in A^{n,q}(X_0,\Omega^p_0\otimes K_0^{-1}),
\end{align*}
and
\begin{align*}
\sigma=\sum_{I}\sigma_I\otimes dz^I\otimes\frac{1}{\langle dz\rangle},\quad \sigma_I=\sum_{J}s_{I,\bar{J}}d\bar{z}^J\wedge\langle dz\rangle.
\end{align*}

Then
\begin{align*}
&\nabla^{1,0}_{\Omega^p_0\otimes K^{-1}_0}\circ i_{\phi(t)}\sigma+\psi_{\Omega^p_t\otimes K^{-1}_t}\sigma\\
&=\left(\partial\circ i_{\phi(t)}\sigma_I\right)\otimes dz^I\otimes\frac{1}{\langle dz\rangle}
+(-1)^{n+q}\left(i_{\phi(t)}\sigma_I\right)\wedge\nabla^{1,0}_{\Omega^p_0\otimes K^{-1}_0}\left(dz^I\otimes\frac{1}{\langle dz\rangle}\right)\\
&\quad+(-1)^{n+q}\sigma_I\wedge\psi_{\Omega^p_t\otimes K^{-1}_t}\left(dz^I\otimes\frac{1}{\langle dz\rangle}\right)\\
&=\left(\partial\circ i_{\phi(t)}\sigma_I\right)\otimes dz^I\otimes\frac{1}{\langle dz\rangle}
+(-1)^{n+q}\left(i_{\phi(t)}\sigma_I\right)\wedge\nabla^{1,0}_{\Omega^p_0\otimes K^{-1}_0}\left(dz^I\otimes\frac{1}{\langle dz\rangle}\right)\\
&\quad+(-1)^{n+q}\sigma_I\otimes\partial\circ i_\phi dz^I\otimes\frac{1}{\langle dz\rangle}
+(-1)^{n+q}\sigma_I\wedge\sum_{m,l}\partial_l\phi^l_md\bar{z}^m\otimes dz^I\otimes\frac{1}{\langle dz\rangle}\\
&\quad+(-1)^{n+q}\sigma_I\wedge i_{\phi(t)}\nabla^{1,0}_{\Omega^p_0\otimes K^{-1}_0}\left(dz^I\otimes\frac{1}{\langle dz\rangle}\right).
\end{align*}

Moreover,
\begin{align*}
\left(i_\phi\langle dz\rangle\right)\wedge dz^k
&=\sum_i\phi^i_{\bar{j}}d\bar{z}^j\wedge(-1)^{i-1}dz^1\wedge\cdots\wedge\widehat{dz^i}\wedge\cdots\wedge dz^n\wedge dz^k\\
&=\phi^k_{\bar{j}}d\bar{z}^j\wedge(-1)^{k-1+n-k}\langle dz\rangle\\
&=-\langle dz\rangle\wedge i_\phi d\bar{z}^k
\end{align*}
implies
\begin{align*}
\left(i_{\phi(t)}\sigma_I\right)\wedge\nabla^{1,0}_{\Omega^p_0\otimes K^{-1}_0}\left(dz^I\otimes\frac{1}{\langle dz\rangle}\right)
=-\sigma_I\wedge i_{\phi(t)}\nabla^{1,0}_{\Omega^p_0\otimes K^{-1}_0}\left(dz^I\otimes\frac{1}{\langle dz\rangle}\right).
\end{align*}

On the other hand,
\begin{align*}
\partial\circ i_{\phi(t)}\sigma_I
&=\partial\left(\sum_{J}s_{I,\bar{J}}d\bar{z}^J\wedge i_{\phi(t)}\langle dz\rangle\right)\\
&=\sum_{J}\partial s_{I,\bar{J}}\wedge d\bar{z}^J\wedge i_{\phi(t)}\langle dz\rangle
+(-1)^q\sum_{J}s_{I,\bar{J}}d\bar{z}^J\wedge \partial (i_{\phi(t)}\langle dz\rangle)\\
&=(-1)^{n+q}\sum_{J} d\bar{z}^J\wedge i_{\phi(t)}\langle dz\rangle\wedge\partial s_{I,\bar{J}}
+(-1)^q\sum_{J}s_{I,\bar{J}}d\bar{z}^J\wedge \partial (i_{\phi(t)}\langle dz\rangle)\\
&=(-1)^{n+q+1}\sum_{J} d\bar{z}^J\wedge\langle dz\rangle\wedge\left(i_{\phi(t)}\circ\partial s_{I,\bar{J}}\right)
+(-1)^{q+1}\sum_{J,l,m}s_{I,\bar{J}}d\bar{z}^J\wedge \partial_l\phi^l_md\bar{z}^m\wedge\langle dz\rangle\\
&=(-1)^{q+1}\sum_{J}d\bar{z}^J\wedge\left(i_{\phi(t)}\circ\partial s_{I,\bar{J}}\right)\wedge\langle dz\rangle
+(-1)^{n+q+1}\sum_{J,l,m}\sigma_I\wedge\partial_l\phi^l_md\bar{z}^m.
\end{align*}

Hence,
\begin{align*}
&\nabla^{1,0}_{\Omega^p_0\otimes K^{-1}_0}\circ i_{\phi(t)}\sigma+\psi_{\Omega^p_t\otimes K^{-1}_t}\sigma\\
&=(-1)^{q+1}\sum_{J}d\bar{z}^J\wedge\left(i_{\phi(t)}\circ\partial s_{I,\bar{J}}\right)
\wedge\langle dz\rangle\otimes dz^I\otimes\frac{1}{\langle dz\rangle}
+\sum_I(-1)^{n+q}\sigma_I\otimes\partial\circ i_{\phi(t)} dz^I\otimes\frac{1}{\langle dz\rangle}\\
&=I^{p,q+1}_0\left((-1)^{q+1}d\bar{z}^J\wedge i_{\phi(t)}\circ\partial s_{I,\bar{J}}\wedge dz^I
+(-1)^qs_{I,\bar{J}}\wedge d\bar{z}^J\wedge\partial\circ i_{\phi(t)} dz^I\right)\\
&=I^{p,q+1}_0\left(-i_{\phi(t)}\circ\partial s_{I,\bar{J}}\wedge d\bar{z}^J\wedge dz^I
+(-1)^qs_{I,\bar{J}}\wedge d\bar{z}^J\wedge\partial\circ i_{\phi(t)} dz^I\right)\\
&=I^{p,q+1}_0\left(-L^{1,0}_{\phi(t)} s\right).
\end{align*}
The last equality follows from
\begin{align*}
\left(-L^{1,0}_{\phi(t)}\right)s
&=\left(\partial\circ i_{\phi(t)}-i_{\phi(t)}\circ\partial\right)s\\
&=\partial s_{I,\bar{J}}\wedge d\bar{z}^J\wedge i_{\phi(t)} dz^I
+(-1)^qs_{I,\bar{J}}\wedge d\bar{z}^J\wedge\partial\circ i_{\phi(t)} dz^I\\
&\quad-i_{\phi(t)}\circ\partial s_{I,\bar{J}}\wedge d\bar{z}^J\wedge dz^I
-\partial s_{I,\bar{J}}\wedge d\bar{z}^J\wedge i_{\phi(t)} dz^I\\
&=(-1)^qs_{I,\bar{J}}\wedge d\bar{z}^J\wedge\partial\circ i_{\phi(t)} dz^I
-i_{\phi(t)}\circ\partial s_{I,\bar{J}}\wedge d\bar{z}^J\wedge dz^I.
\end{align*}
\end{proof}

\section{The Correspondence between bundle valued $(p,q)$-forms}

Similar to above section, we need to define the isomorphism $I^{p,q}_t$ at first.

\begin{proposition}
There is an isomorphism $I^{p,q}_t$ such that the diagram
\begin{align*}
\xymatrix{
A^{p,q}(X_t,E_t) \ar[rr]^{I^{p,q}_t}\ar[d]_{\bar{\partial}_t}& & A^{n,q}(X_t,\Omega_t^p\otimes K_t^{-1}\otimes E_t) \ar[d]_{\bar{\partial}_t}\\
A^{p,q+1}(X_t,E_t) \ar[rr]^{I^{p,q+1}_t}& &  A^{n,q+1}(X_t,\Omega_t^p\otimes K_t^{-1}\otimes E_t)
}
\end{align*}
is commutative.
\end{proposition}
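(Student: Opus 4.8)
The plan is to mimic the construction used in the trivial-line-bundle case of the previous section, simply carrying the extra factor $E_t$ along passively. First I would fix $t\in\Delta$ and a holomorphic frame coordinate $\bigl(U_\alpha,(e_\alpha)_t,(z_\alpha)_t\bigr)$ on $(X_t,E_t)$, and write a local section $s\in A^{p,q}(X_t,E_t)$ as
\begin{align*}
s=\sum_{I,J,\mu}s^{\mu}_{I,\bar J}\,d\bar z^J\wedge dz^I\otimes e_\mu,
\end{align*}
where $\{e_\mu\}$ is the local holomorphic frame of $E_t$. I then define
\begin{align*}
I^{p,q}_t s:=\sum_{I,J,\mu}s^{\mu}_{I,\bar J}\,d\bar z^J\wedge\langle dz\rangle\otimes dz^I\otimes\frac{1}{\langle dz\rangle}\otimes e_\mu ,
\qquad \langle dz\rangle=dz^1\wedge\cdots\wedge dz^n.
\end{align*}
That is, $I^{p,q}_t=I^{p,q}_t\otimes \mathrm{id}_{E_t}$ in the obvious sense, pairing the $(p,q)$-form part into $A^{n,q}(X_t,\Omega_t^p\otimes K_t^{-1})$ exactly as before and leaving the $E_t$-coefficient untouched. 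One must first check this is well-defined independently of the frame and coordinate, and that it is an isomorphism; both follow because $I^{p,q}_t$ was already shown to be an isomorphism on scalar $(p,q)$-forms and tensoring with the (locally constant-transition-on-holomorphic-frames) bundle $E_t$ preserves these properties fibrewise.

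Next I would verify the commutativity $I^{p,q+1}_t\circ\bar\partial_t=\bar\partial_t\circ I^{p,q}_t$. Since $e_\mu$ is a \emph{holomorphic} frame, $\bar\partial_t e_\mu=0$, so the Dolbeault operator on $A^{p,q}(X_t,E_t)$ acts only on the coefficient functions $s^{\mu}_{I,\bar J}$:
\begin{align*}
\bar\partial_t s=\sum_{I,J,\mu,k}\frac{\partial s^{\mu}_{I,\bar J}}{\partial\bar z^k}\,d\bar z^k\wedge d\bar z^J\wedge dz^I\otimes e_\mu .
\end{align*}
Likewise, on the right-hand side $\Omega_t^p\otimes K_t^{-1}\otimes E_t$ carries the tensor-product holomorphic structure, so $\bar\partial_t$ again differentiates only the coefficient functions against the fixed holomorphic local frame $dz^I\otimes\frac{1}{\langle dz\rangle}\otimes e_\mu$. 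Applying $I^{p,q+1}_t$ to the displayed expression for $\bar\partial_t s$ and comparing with $\bar\partial_t(I^{p,q}_ts)$ gives the same sum term by term, which is precisely the computation already carried out in the scalar case with $e_\mu$ appended throughout. Hence the square commutes.

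The only genuinely delicate point is the frame-independence / global well-definedness of $I^{p,q}_t$: although $I^{p,q}_t$ looks frame-dependent because $\langle dz\rangle$ and $dz^I$ both change under a coordinate transformation, the two changes cancel in the pairing $dz^I\otimes\frac{1}{\langle dz\rangle}$ (this cancellation is exactly what makes $\Omega^p_t\otimes K_t^{-1}$ the right target, and it is inherited from the scalar construction of the previous section), while the $E_t$-part transforms by the honest holomorphic transition matrices of $E_t$, matching on both sides. I would record this as the content of the proof, noting that once $I^{p,q}_t$ is seen to be the scalar isomorphism tensored with $\mathrm{id}_{E_t}$, both well-definedness and the commutativity with $\bar\partial_t$ reduce verbatim to the proposition of the previous section. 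No new analytic input is needed; the work is purely bookkeeping of the extra holomorphic factor.
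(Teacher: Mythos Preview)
Your proposal is correct and follows essentially the same approach as the paper: define $I^{p,q}_t$ in local holomorphic frames by the formula $s^{\mu}_{I,\bar J}\,d\bar z^J\wedge dz^I\otimes e_\mu \mapsto s^{\mu}_{I,\bar J}\,d\bar z^J\wedge\langle dz\rangle\otimes dz^I\otimes\frac{1}{\langle dz\rangle}\otimes e_\mu$ and check commutativity with $\bar\partial_t$ by direct computation, using that $\bar\partial_t$ only hits the coefficient functions in a holomorphic frame. Your additional remarks on frame-independence and the interpretation as (scalar $I^{p,q}_t$)$\otimes\mathrm{id}_{E_t}$ go slightly beyond what the paper spells out, but they are entirely in the same spirit and do not constitute a different route.
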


\begin{proof}
Fix a holomorphic coordinate $(z^i)$ on $X_t$, and holomorphic frame $\{e_k\}$ on $E_t$, any $E_t$-valued $(p,q)$-form $s$ on $X_t$ is represented as
\begin{align*}
s=\sum_{I,J,k}s_{I,\bar{J},k}d\bar{z}^J\wedge dz^I\otimes e_k,\quad dz^I=dz^{i_1}\wedge\cdots\wedge dz^{i_p},\quad d\bar{z}^J=d\bar{z}^{j_1}\wedge\cdots\wedge d\bar{z}^{j_q}.
\end{align*}
the operator $I^{p,q}$ is defined as
\begin{align*}
I^{p,q}s:=\sum_{I,J,k}s_{I,\bar{J},k}d\bar{z}^J\wedge\langle dz\rangle\otimes dz^I\otimes\frac{1}{\langle dz\rangle}\otimes e_k,
\quad \langle dz\rangle=dz^1\wedge\cdots\wedge dz^n.
\end{align*}
then
\begin{align*}
I^{p,q+1}\circ\bar{\partial}s
&=I^{p,q+1}\left(\sum_{I,J,k,l}\bar{\partial}_ls_{I,\bar{J},k}d\bar{z}_l\wedge d\bar{z}^J\wedge dz^I\otimes e_k\right)\\
&=\sum_{I,J,k,l}\bar{\partial}_ls_{I,\bar{J},k}d\bar{z}_l\wedge d\bar{z}^J\wedge\langle dz\rangle\otimes dz^I\otimes\frac{1}{\langle dz\rangle}\otimes e_k\\
&=\bar{\partial}\circ I^{p,q} s.
\end{align*}
\end{proof}

\begin{theorem}\label{thm-2}
Denote
\begin{align*}
P_t=\left(I^{p,q}_t\right)^{-1}\circ e^{i_{\phi(t)}}\circ I^{p,q}_0,
\end{align*}
then for any $E_0$-valued $(p,q)$-form $s\in A^0(X_0,E_0)$ we have
\begin{align*}
\bar{\partial}_t\circ P_t(s)=P_t\circ\left(\bar{\partial}-\mathcal{L}^{1,0}_{\phi(t)}+\psi_{E_t}\right)s£¬
\end{align*}
\end{theorem}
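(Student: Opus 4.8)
The plan is to imitate the proof of Theorem \ref{thm-1}, carrying $E_0$ along as an extra tensor factor; abbreviate $\mathcal{F}=\Omega^p\otimes K^{-1}\otimes E$. First I would unwind $P_t=\left(I^{p,q}_t\right)^{-1}\circ e^{i_{\phi(t)}}\circ I^{p,q}_0$, combine the commutativity of $I^{p,q}_t$ with $\bar{\partial}_t$ (the preceding Proposition) with Proposition \ref{pro-2} applied to the holomorphic vector bundle $\mathcal{F}_0$ over $X_0$, and invoke the definition of $P_t$ on $(p,q+1)$-forms:
\begin{align*}
\bar{\partial}_t\circ P_t(s)
&=\left(I^{p,q+1}_t\right)^{-1}\circ\bar{\partial}_t\circ e^{i_{\phi(t)}}\circ I^{p,q}_0(s)\\
&=\left(I^{p,q+1}_t\right)^{-1}\circ e^{i_{\phi(t)}}\circ\left(\bar{\partial}+\nabla^{1,0}_{\mathcal{F}_0}\circ i_{\phi(t)}+\psi_{\mathcal{F}_t}\right)\circ I^{p,q}_0(s)\\
&=P_t\circ\left(I^{p,q+1}_0\right)^{-1}\circ\left(\bar{\partial}+\nabla^{1,0}_{\mathcal{F}_0}\circ i_{\phi(t)}+\psi_{\mathcal{F}_t}\right)\circ I^{p,q}_0(s).
\end{align*}
Thus the assertion reduces to the operator identity
\begin{align*}
\left(I^{p,q+1}_0\right)^{-1}\circ\left(\bar{\partial}+\nabla^{1,0}_{\mathcal{F}_0}\circ i_{\phi(t)}+\psi_{\mathcal{F}_t}\right)\circ I^{p,q}_0=\bar{\partial}-\mathcal{L}^{1,0}_{\phi(t)}+\psi_{E_t}
\end{align*}
of maps $A^{p,q}(X_0,E_0)\to A^{p,q+1}(X_0,E_0)$.

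Next I would split $\nabla^{1,0}_{\mathcal{F}_0}$ and $\psi_{\mathcal{F}_t}$ over the tensor product $\mathcal{F}_0=\left(\Omega^p_0\otimes K^{-1}_0\right)\otimes E_0$: the Chern connection of a tensor product is the tensor-product connection, and $\psi_{\mathcal{F}_t}=\psi_{\Omega^p_t\otimes K^{-1}_t}\otimes\mathrm{id}_{E_0}+\mathrm{id}\otimes\psi_{E_t}$ because $\psi$ is a derivation over tensor products (\cite[Corollary 4.4]{Liu-Tu2018}, exactly as used above for $\Omega_t^p\otimes K_t^{-1}$), and then match the resulting pieces. The $\bar{\partial}$-terms agree by the preceding Proposition, since in a holomorphic frame both Dolbeault operators just differentiate coefficients. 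Fixing a holomorphic frame $\{e_k\}$ of $E_0$ and writing $\nabla^{1,0}_{\mathcal{F}_0}$ as $\nabla^{1,0}_{\Omega^p_0\otimes K^{-1}_0}$ (with $e_k$ a spectator) plus the action of the connection $1$-form $\theta$ of $E_0$ on $e_k$, the first part composed with $i_{\phi(t)}$ and combined with $\psi_{\Omega^p_t\otimes K^{-1}_t}$ reproduces, coefficient by coefficient, exactly the computation in the proof of Theorem \ref{thm-1} (with its internal cancellation between the $\nabla^{1,0}$ and $\psi$ contributions), hence contributes $\partial\circ i_{\phi(t)}-i_{\phi(t)}\circ\partial$ read off coefficient-wise. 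The $\psi_{E_t}$-summand of $\psi_{\mathcal{F}_t}$ contributes $\psi_{E_t}$: since $I^{p,q}_0$ does not touch the $E_0$-slot and $\psi_{E_t}$ is of type $(0,1)$, one checks directly that $\left(I^{p,q+1}_0\right)^{-1}\circ\left(\mathrm{id}\otimes\psi_{E_t}\right)\circ I^{p,q}_0=\psi_{E_t}$.

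It then remains to identify the contribution of the last piece, $\theta$ acting on $e_k$ after $i_{\phi(t)}$, and I claim it is precisely the connection correction turning $i_{\phi(t)}\circ\partial-\partial\circ i_{\phi(t)}$ into $\mathcal{L}^{1,0}_{\phi(t)}$. Indeed, in a holomorphic frame with connection $1$-form $\theta$ one has $\nabla^{1,0}_{E_0}=\partial+\theta\wedge(\cdot)$, so $\mathcal{L}^{1,0}_{\phi(t)}-\left(i_{\phi(t)}\circ\partial-\partial\circ i_{\phi(t)}\right)=i_{\phi(t)}\circ(\theta\wedge\cdot)-(\theta\wedge\cdot)\circ i_{\phi(t)}$, and since $i_{\phi(t)}$ is a derivation for the wedge product this commutator is the operator $\eta\mapsto(-1)^{\deg\eta}\,\eta\wedge i_{\phi(t)}\theta$ (the matrix $\theta$ acting on the frame). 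On the other hand, applying the $E_0$-piece of $\nabla^{1,0}_{\mathcal{F}_0}$ after $i_{\phi(t)}$ to $I^{p,q}_0 s$ produces terms of the shape $(-1)^{n+q}\,d\bar{z}^J\wedge\left(i_{\phi(t)}\langle dz\rangle\right)\wedge\theta$, and the identity for $\left(i_{\phi(t)}\langle dz\rangle\right)\wedge dz^k$ used in the proof of Theorem \ref{thm-1}, applied to the $(1,0)$-form entries of $\theta$, rewrites $\left(i_{\phi(t)}\langle dz\rangle\right)\wedge\theta$ as $-\langle dz\rangle\wedge i_{\phi(t)}\theta$; carrying the remaining signs through $\left(I^{p,q+1}_0\right)^{-1}$ then yields exactly $-\bigl(\mathcal{L}^{1,0}_{\phi(t)}-(i_{\phi(t)}\circ\partial-\partial\circ i_{\phi(t)})\bigr)$. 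Summing the four contributions gives $\bar{\partial}-\mathcal{L}^{1,0}_{\phi(t)}+\psi_{E_t}$, as wanted. I expect the main obstacle to be this final sign bookkeeping: keeping the Koszul signs consistent while moving the $(0,1)$-form entries of $\theta$ (and of $\psi_{E_t}$) past $\langle dz\rangle$, $dz^I$ and $d\bar{z}^J$ and re-applying $I^{p,q+1}_0$, and matching them against the sign that distinguishes $\mathcal{L}^{1,0}_{\phi(t)}$ from $L^{1,0}_{\phi(t)}$ on $E_0$-valued forms; the rest is the tensor-product Leibniz rule together with the quoted results.
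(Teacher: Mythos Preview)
Your proposal is correct and follows essentially the same route as the paper: reduce via the commutativity of $I^{p,q}_t$ with $\bar\partial_t$ and Proposition~\ref{pro-2}, split the resulting operator over the tensor product $(\Omega^p_0\otimes K^{-1}_0)\otimes E_0$, invoke the computation of Theorem~\ref{thm-1} for the first factor, and identify the residual $\theta$-term as the difference $\mathcal{L}^{1,0}_{\phi(t)}-L^{1,0}_{\phi(t)}$. The paper packages step~5 slightly differently---it first expands $\mathcal{L}^{1,0}_{\phi}s=\sum_k L^{1,0}_{\phi}s_k\otimes e_k+(-1)^{p+q}\sum_k s_k\wedge i_\phi\nabla^{1,0}_{E_0}e_k$ and then matches---but this is the same identification you carry out with the $\bigl(i_{\phi(t)}\langle dz\rangle\bigr)\wedge dz^k$ trick.
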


\begin{proof}
By Proposition \ref{pro-2},
\begin{align*}
\bar{\partial}_t\circ P_t(s)
&=\bar{\partial}_t\circ\left(I^{p,q}_t\right)^{-1}\circ e^{i_{\phi(t)}}\circ I^{p,q}_0(s)\\
&=\left(I^{p,q+1}_t\right)^{-1}\circ\bar{\partial}_t\circ e^{i_{\phi(t)}}\circ I^{p,q}_0(s)\\
&=\left(I^{p,q+1}_t\right)^{-1}\circ e^{i_{\phi(t)}}\circ\left(\bar{\partial}_0+\nabla^{1,0}_{\Omega^p_0\otimes K^{-1}_0\otimes E_0}\circ i_{\phi(t)}+\psi_{\Omega^p_t\otimes K^{-1}_t\otimes E_0}\right)\circ I^{p,q}_0(s).
\end{align*}

Thus, we just need to prove
\begin{align*}
\left(\nabla^{1,0}_{\Omega^p_0\otimes K^{-1}_0\otimes E_0}\circ i_{\phi(t)}+\psi_{\Omega^p_t\otimes K^{-1}_t\otimes E_0}\right)\circ I^{p,q}_0(s)=I^{p,q+1}_0\circ\left(-\mathcal{L}^{1,0}_{\phi(t)}+\psi_{E_t}\right)s.
\end{align*}

Denote
\begin{align*}
\sigma=I^{p,q}_0s\in A^{n,q}(X_0,\Omega^p_0\otimes K_0^{-1}\otimes E_0)
\end{align*}
and
\begin{align*}
\sigma=\sum_{I,k}\sigma_{I,k}\otimes dz^I\otimes\frac{1}{\langle dz\rangle}\otimes e_k,\quad \sigma_{I,k}=\sum_{J}s_{I,\bar{J},k}d\bar{z}^J\wedge\langle dz\rangle.
\end{align*}

Then, we have
\begin{align*}
&\nabla^{1,0}_{\Omega^p_0\otimes K^{-1}_0\otimes E_0}\circ i_{\phi(t)}\sigma
+\psi_{\Omega^p_t\otimes K^{-1}_t\otimes E_t}\sigma\\
&=\nabla^{1,0}_{\Omega^p_0\otimes K^{-1}_0}\circ i_{\phi(t)}\left(\sigma_{I,k}\otimes dz^I\otimes\frac{1}{\langle dz\rangle}\right)\otimes e_k
+(-1)^{n+q}\left(i_{\phi(t)}\sigma_{I,k}\otimes dz^I\otimes\frac{1}{\langle dz\rangle}\right)\otimes\nabla^{1,0}_{E_0}e_k\\
&\quad+\psi_{\Omega^p_t\otimes K^{-1}_t}\left(\sigma_{I,k}\otimes dz^I\otimes\frac{1}{\langle dz\rangle}\right)\otimes e_k+(-1)^{n+q}\sigma_{I,k}\otimes dz^I\otimes\frac{1}{\langle dz\rangle}\otimes\psi_{E_t}e_k.
\end{align*}
On the other hand,
\begin{align*}
\mathcal{L}^{1,0}_\phi s &=\left(i_\phi\circ\nabla^{1,0}_{E_0}-\nabla^{1,0}_{E_0}\circ i_\phi\right)s\\
&=\sum_k i_\phi\left(\partial s_k\otimes e_k+(-1)^{p+q} s_k\wedge\nabla^{1,0}_{E_0} e_k\right)
-\sum_k\nabla^{1,0}_{E_0}\left(i_\phi s_k\otimes e_k\right)\\
&=\sum_k L^{1,0}_\phi s_k\otimes e_k+\sum_k(-1)^{p+q}s_k\wedge i_\phi\nabla^{1,0}_{E_0} e_k.
\end{align*}
Hence, by Theorem \ref{thm-1}
\begin{align*}
&\nabla^{1,0}_{\Omega^p_0\otimes K^{-1}_0\otimes E_0}\circ i_{\phi(t)}\sigma+\psi_{\Omega^p_t\otimes K^{-1}_t\otimes E_t}\sigma\\
&=I^{p,q+1}_0(-L^{1,0}_{\phi(t)}s_k\otimes e_k-\sum_k(-1)^{p+q}s_k\wedge i_\phi\nabla^{1,0}_{E_0} e_k
+\psi_{E_t}s)\\
&=I^{p,q+1}_0\circ\left(-\mathcal{L}^{1,0}_\phi+\psi_{E_t}\right)s
\end{align*}
\end{proof}

Theorem \ref{thm-2} generalizes the correspondence (10) in \cite{Huang1995}, whose commutative diagram is valid for bundle valued $(0,q)$-forms.

The Chern connection $\nabla$ on pair $(X_0,E_0)$ is not integrable with respect to $(X_t,E_t)$ in general, but we have the following observation.

\begin{corollary}
\begin{align*}
\nabla^{0,1}_t\circ P_t(s)=P_t\circ\left(\bar{\partial}-\mathcal{L}_{\phi(t)}^{1,0}\right)s
\end{align*}
for any $E_0$-valued $(p,q)$-form $s\in A^0(X_0,E_0)$.
\end{corollary}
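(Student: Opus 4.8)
The plan is to obtain the Corollary directly from Theorem \ref{thm-2} by peeling off the zeroth–order term $\psi_{E_t}$. The key input is Proposition \ref{pro-3}: since $\nabla+\psi_{E_t}$ is an \emph{integrable} connection on $(X_t,E_t)$, its $(0,1)$–component with respect to the complex structure of $X_t$ is exactly $\bar\partial_t$, so on $A^{\bullet,\bullet}(X_t,E_t)$ one has the operator identity $\nabla^{0,1}_t=\bar\partial_t-\psi_{E_t}$, where $\nabla$ is the pulled–back Chern connection of $(X_0,E_0)$ — which, as in the remark preceding the Corollary, is not integrable for $(X_t,E_t)$ — and $\psi_{E_t}$ acts by left wedging the $\mathrm{End}(E_t)$–valued form (the $(1,0)_t$–part of $\psi_{E_t}$ will be seen below to be immaterial once we transport through $P_t$). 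Granting this, Theorem \ref{thm-2} yields
\[
\nabla^{0,1}_t\circ P_t(s)=\bar\partial_t\circ P_t(s)-\psi_{E_t}\cdot P_t(s)=P_t\circ\bigl(\bar\partial-\mathcal{L}^{1,0}_{\phi(t)}+\psi_{E_t}\bigr)s-\psi_{E_t}\cdot P_t(s),
\]
so the whole statement reduces to the intertwining relation $\psi_{E_t}\cdot P_t(s)=P_t\circ(\psi_{E_t}\cdot s)$.

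To prove this last relation I would unwind $P_t=(I^{p,q}_t)^{-1}\circ e^{i_{\phi(t)}}\circ I^{p,q}_0$ and check three elementary points. First, left wedging by a form of type $(0,1)$ commutes with the reshuffling operators $I^{p,q}_0$ and $I^{p,q}_t$, directly from their definitions. Second, the contraction $i_{\phi(t)}$ is an even derivation of the exterior algebra which annihilates every pure $(0,1)$–form; hence $i^k_{\phi(t)}\bigl(\psi_{E_t}\wedge\tau\bigr)=\psi_{E_t}\wedge i^k_{\phi(t)}\tau$ for all $k$ and all $\tau$, and therefore $e^{i_{\phi(t)}}\bigl(\psi_{E_t}\wedge\tau\bigr)=\psi_{E_t}\wedge e^{i_{\phi(t)}}\tau$. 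Third, $P_t$ routes everything through the bundle $\Omega^p_t\otimes K^{-1}_t\otimes E_t$, whose associated forms have top holomorphic degree $n$ for the complex structure of $X_t$, so left wedging by the $(1,0)_t$–component of $\psi_{E_t}$ kills them; thus $\psi_{E_t}\wedge e^{i_{\phi(t)}}\sigma$ coincides with its $(0,1)_t$–part, which is precisely how the operator $\psi_{E_t}$ on $A^{p,q}(X_t,E_t)$ is to be read after transport through $(I^{p,q}_t)^{-1}$. Chaining these three points, with $\sigma=I^{p,q}_0 s$, gives
\[
\psi_{E_t}\cdot P_t(s)=(I^{p,q+1}_t)^{-1}\bigl(\psi_{E_t}\wedge e^{i_{\phi(t)}}\sigma\bigr)=(I^{p,q+1}_t)^{-1}e^{i_{\phi(t)}}\bigl(\psi_{E_t}\wedge\sigma\bigr)=P_t\circ(\psi_{E_t}\cdot s).
\]

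I expect the only genuine subtlety to be the third point: one has to make sure that the extra $(1,0)_t$–part of $\psi_{E_t}$ disappears exactly because $P_t$ passes through the top–holomorphic–degree bundle $\Omega^p_t\otimes K^{-1}_t\otimes E_t$ — the same mechanism that already makes Proposition \ref{pro-2} and Theorem \ref{thm-2} work. Everything else is bookkeeping. Once the intertwining is in place the Corollary follows immediately from the displayed identity of the first paragraph; one may also phrase it more invariantly by saying that $P_t$ conjugates the $(0,1)$–part of the non–integrable pulled–back Chern connection on $(X_t,E_t)$ into the operator $\bar\partial-\mathcal{L}^{1,0}_{\phi(t)}$ on $X_0$.
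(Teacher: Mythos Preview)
Your argument is correct and close in spirit to the paper's, but the execution differs. The paper does \emph{not} start from Theorem~\ref{thm-2}; instead it writes $\nabla^{0,1}_t=\bar\partial_t+\theta^{0,1}_t$ (with $\theta^{0,1}_t$ the $(0,1)_t$--part of the Chern connection form of $(X_0,E_0)$), checks that $\theta^{0,1}_t$ commutes with $I^{p,q}_t$, and then reruns the chain $I^{p,q}_t\to e^{i_{\phi(t)}}\to I^{p,q}_0$ with $\nabla^{0,1}_t$ in place of $\bar\partial_t$, invoking \eqref{2-8} rather than Proposition~\ref{pro-2} for the middle step. Your route is a genuine \emph{corollary} of Theorem~\ref{thm-2}: you subtract $\psi_{E_t}$ and reduce to the intertwining $\psi_{E_t}\circ P_t=P_t\circ\psi_{E_t}$, which you verify componentwise. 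The two are the same subtraction seen from opposite ends, since $\theta^{0,1}_t=-(\psi_{E_t})^{0,1}_t$ by Proposition~\ref{pro-3}; what you call the ``third point'' (the $(1,0)_t$--part of $\psi_{E_t}$ dying on $A^{n,\bullet}(X_t,\cdot)$) is hidden in the paper inside the fact that $\nabla^{0,1}_t=\nabla$ on top--degree forms, which is what lets it quote \eqref{2-8}. Your version has the advantage of using Theorem~\ref{thm-2} as a black box and isolating the one new observation cleanly; the paper's version avoids the slightly delicate $(0,1)_0$ versus $(0,1)_t$ bookkeeping for $\psi_{E_t}$ by never naming $\psi_{E_t}$ on the $X_t$ side at all.
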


\begin{proof}
Since
\begin{align*}
\nabla^{0,1}_t=\bar{\partial}_t+\theta^{0,1}_t
\end{align*}
and
\begin{align*}
I^{p,q+1}_t\circ\theta^{0,1}_t=\theta^{0,1}_t\circ I^{p,q}_t,
\end{align*}
we have
\begin{align*}
\nabla^{0,1}_t\circ I^{p,q}_t=I^{p,q+1}_t\circ\nabla^{0,1}_t.
\end{align*}
Theorem \ref{thm-2} implies
\begin{align*}
\nabla^{0,1}_t\circ P_t(s)
&=\nabla^{0,1}_t\circ\left(I^{p,q}_t\right)^{-1}\circ e^{i_{\phi(t)}}\circ I^{p,q}_0(s)\\
&=\left(I^{p,q+1}_t\right)^{-1}\circ\nabla^{0,1}_t\circ e^{i_{\phi(t)}}\circ I^{p,q}_0(s)\\
&=\left(I^{p,q+1}_t\right)^{-1}\circ e^{i_{\phi(t)}}\circ
\left(\bar{\partial}_{E_0}+\nabla^{1,0}_{E_0}\circ i_{\phi(t)}\right)\circ I^{p,q}_0(s)\\
&=\left(I^{p,q+1}_t\right)^{-1}\circ e^{i_{\phi(t)}}\circ I^{p,q+1}_0\circ\left(\bar{\partial}_{E_0}-\mathcal{L}_{\phi(t)}^{1,0}\right)s
\end{align*}
\end{proof}

\begin{proposition}\label{pro-4}
\begin{align*}
\left(\bar{\partial}-\mathcal{L}_{\phi(t)}^{1,0}\right)\circ\left(\bar{\partial}-\mathcal{L}_{\phi(t)}^{1,0}\right)
=-i_{\phi(t)}\Theta.
\end{align*}
\end{proposition}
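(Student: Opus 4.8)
The plan is to derive the identity by squaring the conjugation formula recorded inside the proof of Proposition~\ref{pro-2}. There, combining (\ref{1-1}) with the Maurer--Cartan equation (\ref{M-C}), one gets the operator identity
\[
e^{-i_{\phi(t)}}\circ\nabla\circ e^{i_{\phi(t)}}=\nabla-\mathcal{L}^{1,0}_{\phi(t)}
\]
on $E_0$-valued forms on $X_0$. Since $\nabla^2$ is the curvature operator $\omega\mapsto\Theta\wedge\omega$, squaring both sides gives
\[
\left(\nabla-\mathcal{L}^{1,0}_{\phi(t)}\right)^2=e^{-i_{\phi(t)}}\circ\Theta\circ e^{i_{\phi(t)}}.
\]

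The next step is to extract $D:=\bar{\partial}-\mathcal{L}^{1,0}_{\phi(t)}$ from the left-hand side. Writing $\nabla-\mathcal{L}^{1,0}_{\phi(t)}=D+\nabla^{1,0}$ and expanding,
\[
\left(D+\nabla^{1,0}\right)^2=D^2+\left\{D,\nabla^{1,0}\right\}+\left(\nabla^{1,0}\right)^2,
\]
where $\{\cdot,\cdot\}$ is the anticommutator. Because $\nabla$ is a Chern connection, the $(2,0)$- and $(0,2)$-components of its curvature vanish, hence $\bar{\partial}^2=0$, $\left(\nabla^{1,0}\right)^2=0$, and $\left\{\bar{\partial},\nabla^{1,0}\right\}=\nabla^2=\Theta$. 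Expanding $\mathcal{L}^{1,0}_{\phi(t)}=i_{\phi(t)}\circ\nabla^{1,0}-\nabla^{1,0}\circ i_{\phi(t)}$ and using $\left(\nabla^{1,0}\right)^2=0$ once more yields $\left\{\mathcal{L}^{1,0}_{\phi(t)},\nabla^{1,0}\right\}=0$, so $\left\{D,\nabla^{1,0}\right\}=\Theta$. Therefore
\[
D^2=e^{-i_{\phi(t)}}\circ\Theta\circ e^{i_{\phi(t)}}-\Theta.
\]

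It remains to compute $e^{-i_{\phi(t)}}\circ\Theta\circ e^{i_{\phi(t)}}$, and this is the step I expect to require the most care. The key facts are: $i_{\phi(t)}$ is a degree-zero derivation of the exterior algebra acting on the form part of a bundle-valued form, so $\left[i_{\phi(t)},\Theta\wedge\,\cdot\,\right]=\left(i_{\phi(t)}\Theta\right)\wedge\,\cdot\,$; and $i_{\phi(t)}\Theta$ is of type $(0,2)$, so $i_{\phi(t)}\left(i_{\phi(t)}\Theta\right)=0$ (equivalently $i^2_{\phi(t)}\Theta=\tfrac12 i_{[\phi(t),\phi(t)]}\Theta=0$ on the $(1,1)$-form $\Theta$). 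Hence the adjoint action $\mathrm{ad}_{i_{\phi(t)}}$ on the operator $\Theta\wedge\,\cdot\,$ is nilpotent of order two, the expansion $e^{-i_{\phi(t)}}\,T\,e^{i_{\phi(t)}}=\sum_{k\geq0}\frac{(-1)^k}{k!}\,\mathrm{ad}^k_{i_{\phi(t)}}(T)$ truncates, and we get $e^{-i_{\phi(t)}}\circ\Theta\circ e^{i_{\phi(t)}}=\Theta-i_{\phi(t)}\Theta$. Substituting this above gives $D^2=-i_{\phi(t)}\Theta$, the asserted identity, with $-i_{\phi(t)}\Theta$ read as the operator $\omega\mapsto-\left(i_{\phi(t)}\Theta\right)\wedge\omega$.

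The only genuine subtlety is the sign bookkeeping in the last paragraph---fixing the convention so that $i_{\phi(t)}$ satisfies a sign-free Leibniz rule, and checking that $\left[i_{\phi(t)},\Theta\wedge\,\cdot\,\right]=\left(i_{\phi(t)}\Theta\right)\wedge\,\cdot\,$ and $i^2_{\phi(t)}\Theta=0$. Everything else is formal. As an independent check, the same formula can be read off from (\ref{2-9}) together with Theorem~\ref{thm-2}, since $\bar{\partial}-\mathcal{L}^{1,0}_{\phi(t)}+\psi_{E_t}$ squares to zero on $X_0$.
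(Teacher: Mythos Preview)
Your argument is correct and takes a genuinely different route from the paper. The paper proves the identity by brute-force expansion: it writes out $\left(\bar{\partial}-i_{\phi(t)}\nabla^{1,0}+\nabla^{1,0}i_{\phi(t)}\right)^2$, invokes the commutator identities of \cite[Lemma~3.2]{Liu-Rao-Yang2014} to reduce the cross terms, and arrives at $-\left(i_{\phi(t)}\Theta\right)\wedge-\mathcal{L}^{1,0}_{\bar{\partial}\phi(t)-\frac12[\phi(t),\phi(t)]}$, the last piece then vanishing by Maurer--Cartan. Your proof instead starts from the conjugation identity $e^{-i_{\phi(t)}}\nabla e^{i_{\phi(t)}}=\nabla-\mathcal{L}^{1,0}_{\phi(t)}$ (where Maurer--Cartan has already been absorbed), squares it, and separates the bidegrees using only $(\nabla^{1,0})^2=0$, $\{\bar{\partial},\nabla^{1,0}\}=\Theta$, and the evident $\{\mathcal{L}^{1,0}_{\phi(t)},\nabla^{1,0}\}=0$; the remaining conjugated curvature is handled by noting that $i_{\phi(t)}$ is an even derivation and that $\Theta$ is of type $(1,1)$. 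What your approach buys is conceptual transparency and independence from the auxiliary bracket identities of \cite{Liu-Rao-Yang2014}; what the paper's direct computation buys is that it simultaneously exhibits the correction term $\mathcal{L}^{1,0}_{\bar{\partial}\phi-\frac12[\phi,\phi]}$ before imposing integrability, making the role of Maurer--Cartan visible in the final step.
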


\begin{proof}
By Lemma 3.2 in \cite{Liu-Rao-Yang2014},
\begin{align*}
&\left(\bar{\partial}-\mathcal{L}_{\phi(t)}^{1,0}\right)\circ
\left(\bar{\partial}-\mathcal{L}_{\phi(t)}^{1,0}\right)\\
&=\left(\bar{\partial}-i_{\phi(t)}\circ\nabla^{1,0}_{E_0}+\nabla^{1,0}_{E_0}\circ i_{\phi(t)}\right)
\circ\left(\bar{\partial}-i_{\phi(t)}\circ\nabla^{1,0}_{E_0}+\nabla^{1,0}_{E_0}\circ i_{\phi(t)}\right)\\
&=-\bar{\partial}\circ i_{\phi(t)}\circ\nabla^{1,0}_{E_0}
+\bar{\partial}\circ\nabla^{1,0}_{E_0}\circ i_{\phi(t)}
-i_{\phi(t)}\circ\nabla^{1,0}_{E_0}\circ\bar{\partial}
+\nabla^{1,0}_{E_0}\circ i_{\phi(t)}\circ\bar{\partial}\\
&\quad+i_{\phi(t)}\circ\nabla^{1,0}_{E_0}\circ i_{\phi(t)}\circ\nabla^{1,0}_{E_0}
-\nabla^{1,0}_{E_0}\circ i_{\phi(t)}\circ i_{\phi(t)}\circ\nabla^{1,0}_{E_0}
+\nabla^{1,0}_{E_0}\circ i_{\phi(t)}\circ\nabla^{1,0}_{E_0}\circ i_{\phi(t)}\\
&=-i_{\bar{\partial}\phi(t)}\circ\nabla^{1,0}_{E_0}
+\left(\Theta\wedge\right)\circ i_{\phi(t)}
-i_{\phi(t)}\circ\left(\Theta\wedge\right)
-\nabla^{1,0}_{E_0}\circ i_{\bar{\partial}\phi(t)}\\
&\quad+\frac{1}{2}\left(i_{[\phi(t),\phi(t)]}\circ\nabla^{1,0}_{E_0}
+\nabla^{1,0}_{E_0}\circ i_{[\phi(t),\phi(t)]}\right)\\
&=-\left(i_{\phi(t)}\Theta\right)\wedge-\mathcal{L}^{1,0}_{\bar{\partial}\phi(t)-\frac{1}{2}[\phi(t),\phi(t)]}\\
&=-\left(i_{\phi(t)}\Theta\right)\wedge.
\end{align*}
The last equality is by Maurer-Cartan integrable equation $\bar{\partial}\phi(t)-\frac{1}{2}[\phi(t),\phi(t)]=0$.
\end{proof}

The Proposition \ref{pro-4} implies that
\begin{align*}
\nabla^{0,1}_t\circ\nabla^{0,1}_t\circ P_t=P_t\circ(-i_{\phi(t)}\Theta\wedge)=\left(P_t(-i_{\phi(t)}\Theta)\wedge\right)\circ P_t.
\end{align*}
Hence,
\begin{align*}
\nabla^{0,1}_t\circ\nabla^{0,1}_t=-P_t(i_{\phi(t)}\Theta)\wedge.
\end{align*}

\begin{corollary}\label{coro-1}
When we consider the line bundle case, i.e., each $E_t$ is holomorphic line bundle over $X_t$, then the second integrable equation in \cite{Liu-Tu2018}
\begin{align*}
(\bar{\partial}-L_{\phi(t)})\psi(t)-i_{\phi(t)}\Theta=0
\end{align*}
is solvable if and only if
\begin{align*}
\nabla^{0,1}_t\circ\nabla^{0,1}_t=\bar{\partial}_t\circ P_t\left(-\psi(t)\right)
\end{align*}
is solvable.
\end{corollary}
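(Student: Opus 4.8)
The plan is to translate the two stated conditions into statements about the same object on $X_0$ via the isomorphism $P_t$ and its compatibility with $\bar\partial_t$ established in Theorem \ref{thm-2}, and then observe that the line bundle hypothesis makes the quadratic term $\psi(t)\wedge\psi(t)$ in equation (\ref{2-9}) vanish, so that the remaining equation is exactly the one that controls the curvature defect of $\nabla^{0,1}_t$.

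First I would recall that for a line bundle $\mathrm{End}(E_0)\cong\mathcal{O}$, so $\psi(t)\in A^{0,1}(X_0)$ is an ordinary $(0,1)$-form, the bracket $\psi(t)\wedge\psi(t)$ is zero, and $\mathcal{L}_{\phi(t)}$ acting on $\psi(t)$ reduces to the holomorphic Lie derivative $L_{\phi(t)}$ plus the $(0,1)$-part which annihilates a $(0,1)$-form; so the second integrable equation of \cite{Liu-Tu2018}, namely (\ref{2-9}), collapses precisely to
\begin{align*}
(\bar\partial-L_{\phi(t)})\psi(t)-i_{\phi(t)}\Theta=0,
\end{align*}
which is the equation in the statement. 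Next I would use Theorem \ref{thm-2} in the line bundle case: it gives $\bar\partial_t\circ P_t = P_t\circ(\bar\partial-\mathcal{L}^{1,0}_{\phi(t)}+\psi_{E_t})$, hence $\bar\partial_t\circ P_t(-\psi(t)) = P_t\circ(\bar\partial-\mathcal{L}^{1,0}_{\phi(t)})(-\psi(t)) + P_t(\psi_{E_t}\wedge(-\psi(t)))$, and the last term drops out because $\psi_{E_t}\wedge\psi(t)=0$ for a line bundle. So $\bar\partial_t\circ P_t(-\psi(t)) = -P_t\circ(\bar\partial-L^{1,0}_{\phi(t)})\psi(t)$.

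Then I would invoke the computation already recorded after Proposition \ref{pro-4}: $\nabla^{0,1}_t\circ\nabla^{0,1}_t = -P_t(i_{\phi(t)}\Theta)\wedge$, equivalently $\nabla^{0,1}_t\circ\nabla^{0,1}_t = P_t(-i_{\phi(t)}\Theta)$ as a $(0,2)$-form (the wedge operator being multiplication by that form in the line bundle case). Combining the two displays, the equation $\nabla^{0,1}_t\circ\nabla^{0,1}_t=\bar\partial_t\circ P_t(-\psi(t))$ is equivalent, after applying $P_t^{-1}$ (an isomorphism), to $-i_{\phi(t)}\Theta = -(\bar\partial-L^{1,0}_{\phi(t)})\psi(t)$ as forms on $X_0$, i.e. to $(\bar\partial-L_{\phi(t)})\psi(t)-i_{\phi(t)}\Theta=0$. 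Since $P_t$ is an isomorphism at every stage, "solvable" on one side matches "solvable" on the other, giving the "if and only if."

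The main obstacle, and the only point requiring genuine care, is the bookkeeping of which operators survive the line bundle specialization: verifying that $\mathcal{L}^{1,0}_{\phi(t)}$ restricted to the relevant objects equals $L^{1,0}_{\phi(t)}$ (up to a term killed by type reasons) and that every quadratic-in-$\psi$ term truly vanishes, so that the full equation (\ref{2-9}) really does reduce to the displayed line bundle equation; and dually, checking that the curvature identity $\nabla^{0,1}_t\circ\nabla^{0,1}_t=-P_t(i_{\phi(t)}\Theta)\wedge$ is exactly the transported version of $-i_{\phi(t)}\Theta$ without stray signs. Once these identifications are in place the equivalence is immediate from the invertibility of $P_t$, so the proof is essentially a matter of assembling Theorem \ref{thm-2}, Proposition \ref{pro-4}, and the line bundle hypothesis.
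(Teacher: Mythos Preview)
Your proposal is correct and follows essentially the same route as the paper: both arguments combine the identity $\nabla^{0,1}_t\circ\nabla^{0,1}_t=-P_t(i_{\phi(t)}\Theta)$ established after Proposition~\ref{pro-4} with the $\bar\partial_t$--$P_t$ commutation formula applied to the scalar form $\psi(t)$, and then use the invertibility of $P_t$ to pass back and forth. The only streamlining available is that, since in the line bundle case $\psi(t)\in A^{0,1}(X_0,\mathrm{End}(E_0))\cong A^{0,1}(X_0)$ is a \emph{scalar} $(0,1)$-form, you can invoke Theorem~\ref{thm-1} directly (which is what the paper does in its one-line proof) rather than routing through Theorem~\ref{thm-2} and then arguing that the extra $\psi_{E_t}$-term disappears.
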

\begin{proof}
\begin{align*}
P_t(i_{\phi(t)}\Theta)=P_t\circ(\bar{\partial}-L_{\phi(t)})\left(\psi(t)\right)=\bar{\partial}_t\circ P_t\left(\psi(t)\right).
\end{align*}
\end{proof}

The above assertion implies Proposition 1.4 in \cite{Huang1995}, which tells that the line bundle is unobstructed along the base family if and only if $c_1(L_0)\in H^{1,1}(X_t)$.

\end{document}